\documentclass[bj,authoryear]{imsart}

\startlocaldefs
\numberwithin{equation}{section}
\theoremstyle{plain}

\newtheorem{theorem}{Theorem}[section]

\newtheorem{lemma}[theorem]{Lemma}
\newtheorem{proposition}[theorem]{Proposition}

\theoremstyle{definition}

\theoremstyle{definition}
\newtheorem{remark}[theorem]{Remark}

\newcommand{\ind}{{\bf 1}}

\def\inddd#1{{\ind}_{\left\{#1\right\}}} 
\newcommand{\proba}{\mathbb P}
\newcommand{\esp}{{\mathbb E}}

\newcommand{\inv}{^{-1}}
\newcommand{\cov}{{\rm{Cov}}}

\newcommand{\eqnh}{\begin{eqnarray*}}
	\newcommand{\eqne}{\end{eqnarray*}}
\newcommand{\eqnhn}{\begin{eqnarray}}
	\newcommand{\eqnen}{\end{eqnarray}}
\newcommand{\equh}{\begin{equation}}
	\newcommand{\eque}{\end{equation}}

\def\summ#1#2#3{\sum_{#1 = #2}^{#3}}

\def\sif#1#2{\sum_{#1=#2}^\infty}

\newcommand{\eqd}{\stackrel{d}{=}}

\def\topp#1{^{(#1)}}

\def\nn#1{{\left\|#1\right\|}}

\def\abs#1{\left|#1\right|}
\def\sabs#1{|#1|}

\def\ccbb#1{\left\{#1\right\}} 

\def\pp#1{\left(#1\right)}
\def\spp#1{(#1)}

\def\mmid{\;\middle\vert\;}

\def\floor#1{\left\lfloor #1 \right\rfloor}

\def\qmand{\quad\mbox{ and }\quad}

\def\mfa{\mbox{ for all }}

\def\wt#1{\widetilde{#1}}
\def\wb#1{\overline{#1}}
\def\what#1{\widehat{#1}}

\def\limn{\lim_{n\to\infty}}

\def\limsupn{\limsup_{n\to\infty}}

\def\weakto{\Rightarrow}

\def\R{{\mathbb R}}

\def\N{{\mathbb N}}

\newcommand{\calF}{{\mathcal F}}
\newcommand{\calG}{{\mathcal G}}

\newcommand{\calP}{{\mathcal P}}
\newcommand{\calL}{{\mathcal L}}

\renewcommand{\d}{{\rm d}}
\newcommand{\aswto}{\stackrel{a.s.w.}\to}

\date{}

\newcommand{\B}{{\mathbb B}}
\def\R{\mathbb{R}}

\def\N{\mathbb{N}}

\def\1{\mathds{1}}

\endlocaldefs

\begin{document}\sloppy

\begin{frontmatter}
\title{On central limit theorems for the Ewens--Pitman model}
\runtitle{On CLT for the Ewens--Pitman model}

\begin{aug}
\author[A]{\inits{Y.}\fnms{Yizao}~\snm{Wang}
\ead[label=e1]{yizao.wang@uc.edu}}
\address[A]{Department of Mathematical Sciences, University of Cincinnati, 2815 Commons Way, 
Cincinnati, OH, 45221-0025, USA\printead[presep={,\ }]{e1}}
\end{aug}

\begin{abstract}
We establish a quenched functional central limit theorem for the total number of components of random partitions induced by a Chinese restaurant process with parameters $(\alpha,\theta), \alpha\in(0,1), \theta>-\alpha$. With $P_j$ denoting the asymptotic frequency of the $j$-th table, it is well-known that the component count has the same law as the occupancy count of an infinite urn scheme with sampling frequencies being $(P_j)_{j\in\N}$. Our analysis follows this approach and is based on earlier results of \citet{karlin67central} and \citet{durieu16infinite}. In words, our result reveals that the fluctuations of the component count consist of two parts, one due to the sampling effect given the asymptotic frequencies $(P_j)_{j\in\N}$, the other due to the fluctuations of the random asymptotic frequencies, and in the limit the fluctuations of the two parts are conditionally independent given the $\alpha$-diversity. Our result strengthens a recent central limit theorem obtained by \citet{bercu24martingale} via a different method.
\end{abstract}

\begin{keyword}
\kwd{Chinese restaurant process}
\kwd{functional central limit theorem}
\kwd{infinite urn model}
\kwd{random permutation}
\end{keyword}

\end{frontmatter}

	\section{Introduction and main result}
Consider the random partitions induced by a Chinese restaurant process with $(\alpha,\theta)$-seating, with $\alpha\in[0,1), \theta>-\alpha$. This family of exchangeable random partitions, referred to as $(\alpha,\theta)$-partitions in the sequel, is arguably one of the most fundamental models in combinatorial stochastic processes \citep{pitman06combinatorial}. 
In recent literature the $(\alpha,\theta)$-partitions have also been referred to as the Ewens--Pitman model (see \citet{favaro25asymptotic} and some references therein). 

When $\alpha=0$, the law of the exchangeable partitions follows the well-known Ewens sampling formula \citep{crane16ubiquitous} with parameter $\theta>0$, and in the special case $(\alpha,\theta)=(0,1)$ the induced random permutations are uniform. Earlier developments from the combinatorial and probabilistic aspects of the random partitions can be found in \citet{pitman06combinatorial,arratia03logarithmic}. The induced random permutations have been studied more recently \citep{bahier22smooth,benarous15fluctuations,garza24limit,wieand00eigenvalue,francois25characteristic} in the literature of random matrix theory. On the application side, great success of these exchangeable random partitions has been found in Bayesian nonparametrics. 
The asymptotic frequencies $(P_j)_{j\in\N}$ of the $(\alpha,\theta)$-partitions have the 
Griffiths--Engen--McCloskey (GEM) distribution with parameters $(\alpha,\theta)$ and arise as the weights of the Pitman--Yor process; the special case $\alpha=0$, known as the Dirichlet process, was first investigated by \citet{ferguson73bayesian}. In words, the random partitions correspond to random clusters in various hierarchical models built upon the Pitman--Yor process. We refer to \citet{broderick12beta,contardi25laws} and references therein for results on Bayesian nonparametrics related to the $(\alpha,\theta)$-partitions. It is worth pointing out that  the majority of the developments have focused on $\alpha=0$ in the literature. For some recent developments on $\alpha\in(0,1)$, see \citet{favaro25asymptotic,contardi25laws} and references therein.

In this paper, we focus on the case $\alpha\in(0,1),\theta >-\alpha$. 
	Let $K_n$ denote the total number of components of the partition of $\{1,\dots,n\}$. Then, it is well-known that
\equh\label{eq:LLN}
	\limn \frac{K_n}{n^\alpha} = S_{\alpha}\quad \mbox{ almost surely,}
\eque
	where $S_\alpha$ is known as the $\alpha$-diversity. Let $(P_j)_{j\in\N}$ denote the asymptotic frequencies of the tables. 
	Set $\calP :=\sigma((P_j)_{j\in\N})$. It is known that $S_\alpha$ is $\calP$-measurable. In particular, with $(P_j^\downarrow)_{j\in\N}$ denoting the asymptotic frequencies in decreasing order, 
\equh\label{eq:diversity}
S_\alpha \equiv S_{\alpha,\theta} := \lim_{j\to\infty}j(P_j^\downarrow)^\alpha \Gamma(1-\alpha), \mbox{ almost surely.}
\eque
The normalization depends only on $\alpha$, and hence in the notation usually the parameter $\theta$ is omitted (while its law depends obviously on $\theta$ too).
A standard reference for us is \citet[Chapter 3]{pitman06combinatorial}, and detailed definitions are provided in Section \ref{sec:CRP}.
	
	There are two central limit theorems in the literature regarding $K_n$. 
First, 
we have	\equh\label{eq:Karlin}
\frac{K_n - \esp(K_n\mid\calP)}{n^{\alpha/2}}\weakto (2^\alpha-1)^{1/2} S_\alpha^{1/2}Z
\eque
as $n\to\infty$,
	where $\calP = \sigma((P_j)_{j\in\N})$
 and $Z$ is a standard Gaussian random variable independent of $S_\alpha$. Throughout, we let $\weakto$ denote convergence in distribution.
This result can be read already from a result of \citet{karlin67central} and Kingman's representation theorem \citep{kingman78representation}, but  seems to have only been explicitly stated recently in \citet[Theorem 3.1]{garza24limit} (with a few extensions, taking $a_j=1$ therein). In fact, the convergence in \eqref{eq:Karlin} was established in the quenched sense, and a quenched functional central limit theorem has been known \citep{durieu16infinite}; all these will be recalled  in \eqref{eq:DW16} below. It is worth pointing out that the centering by $\esp(K_n\mid\calP)$ is natural in view of Karlin's result (which concerns equivalently an infinite urn scheme to be explained below), although it is not the {\em right} centering in view of \eqref{eq:LLN}. 
	
Second, recently \citet{bercu24martingale} established the following central limit theorem:
	\equh\label{eq:BF}
	n^{\alpha/2}\pp{\frac{K_n}{n^\alpha}-S_\alpha} \equiv\frac{K_n - n^\alpha S_\alpha}{n^{\alpha/2}}\weakto S_\alpha^{1/2}Z
	\eque
	as $n\to\infty$, 
	where on the right-hand side, $S_\alpha$ is independent from the standard Gaussian random variable $Z$. The proof is completely different from the one for \eqref{eq:Karlin}, and in particular it relies on a martingale central limit theorem for the `tail sum' of infinite series due to \citet{heyde77central}. Note that \eqref{eq:BF} uses the {\em natural} centering in view of \eqref{eq:LLN}.

In view of the two previously mentioned results, it is natural to write
	\equh\label{eq:decompose}
	\frac{K_n - n^\alpha S_\alpha}{n^{\alpha/2}} = \frac{K_n - \esp(K_n\mid\calP)}{n^{\alpha/2}} + \frac{\esp(K_n\mid\calP) - n^\alpha S_\alpha}{n^{\alpha/2}},
	\eque
and then to expect a central limit theorem also for the second term on the right-hand side above. The main contribution of this paper is  a functional central limit theorem for the joint convergence of the two terms on the right-hand side above. Our functional central limit theorem concerns the following processes
	\begin{align}
	W_n(t)&:=\frac{K_{\floor{nt}}-\esp(K_{\floor{nt}}\mid\calP)}{n^{\alpha/2}},\nonumber\\
	Y_n(t)&:=\frac{\esp(K_{\floor{nt}}\mid\calP)-\floor{nt}^\alpha S_\alpha}{n^{\alpha/2}}, t\in[0,1],n\in\N,\label{eq:Y}
	\end{align}
and the limit processes involve two centered Gaussian processes denoted by $Z_\alpha\topp1,Z_\alpha\topp2$, of which the covariance functions are as follows:
\begin{align*} 
\cov\pp{Z_\alpha\topp1(s),Z_\alpha\topp1(t)} &= (s+t)^\alpha-\max(s^\alpha,t^\alpha),
\\
\cov\pp{Z_\alpha\topp2(s),Z_\alpha\topp2(t)} &= s^\alpha+t^\alpha-(s+t)^\alpha, \quad s,t>0.
\end{align*}
Note that if $Z_\alpha\topp1$ and $Z_\alpha\topp2$ are independent, then 
\equh\label{eq:decomposition}
\pp{Z_\alpha\topp1(t)+Z_\alpha\topp2(t)}_{t\in[0,1]}\eqd \pp{\B_{t^\alpha}}_{t\in[0,1]},
\eque
where the right-hand side is a time-changed Brownian motion (the standard Brownian motion $\B$ indexed by $t^\alpha$).

The main result of this paper is the following. 	
\begin{theorem}\label{thm:1}
		With the notations above,
		\equh\label{eq:joint CLT}
		\pp{\pp{W_n(t)}_{t\in[0,1]},\pp{Y_n(t)}_{t\in[0,1]}}\weakto \pp{S_\alpha^{1/2}\pp{Z\topp1_\alpha(t)}_{t\in[0,1]},S_\alpha^{1/2}\pp{Z_\alpha\topp2(t)}_{t\in[0,1]}},
		\eque
in $D[0,1]^2$ as $n\to\infty$, where on the right-hand side $Z\topp1_\alpha$ and $Z_\alpha\topp2$ are two independent Gaussian processes introduced above and independent from $S_\alpha$.
	\end{theorem}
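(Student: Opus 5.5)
The plan is to use the conditional structure: $W_n$ is driven only by the sampling randomness given $\calP$, while $Y_n$ is $\calP$-measurable. Given $\calP$, $K_n$ has the law of the occupancy count of an infinite urn with frequencies $(P_j)_{j\in\N}$. By \eqref{eq:diversity}, the counting function $\nu(x)=\#\{j: P_j\ge 1/x\}$ satisfies $\nu(x)\sim S_\alpha x^\alpha/\Gamma(1-\alpha)$ almost surely. This is Karlin's regular variation condition with a random slowly varying constant, so the quenched functional CLT of \citet{durieu16infinite} (recalled as \eqref{eq:DW16}) holds for almost every realization of $(P_j)$. It gives that, conditionally on $\calP$, $W_n\weakto S_\alpha^{1/2}Z_\alpha\topp1$ in $D[0,1]$ almost surely. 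The covariance $(s+t)^\alpha-\max(s^\alpha,t^\alpha)$ is exactly the Karlin/Durieu--Wang covariance for the poissonized occupancy count.

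The second step is a functional CLT for the $\calP$-measurable process $Y_n$. Using a Poissonization approximation, I will write $\esp(K_{m}\mid\calP)=\sum_j(1-(1-P_j)^m)\approx\sum_j(1-e^{-mP_j})$, with an error of order $o(n^{\alpha/2})$ uniformly in $m\le n$; this holds since $\sum_j mP_j^2e^{-mP_j}=O(m^{\alpha}\cdot m^{\alpha-1})$-type bounds. Then I need the fluctuations of $\Phi(x):=\sum_j(1-e^{-xP_j})$ around $x^\alpha S_\alpha$. For this I will use the stick-breaking / Poisson–Kingman structure of the $(\alpha,\theta)$ model. Conditionally on $S_\alpha$ (via the change of measure relating $\mathrm{PD}(\alpha,\theta)$ to $\mathrm{PD}(\alpha,0)$, which is absolutely continuous with density a function of $S_\alpha$), the ranked frequencies are $P_j^\downarrow=\Delta_j/T$. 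Here $\Delta_j$ are the jumps of an $\alpha$-stable subordinator on $[0,1]$, i.e.\ the points of a Poisson process with intensity $c\,x^{-\alpha-1}dx$, and $T=\sum\Delta_j$. It is then cleanest to condition on $T$, or to work directly under $\mathrm{PD}(\alpha,0)$ and transfer the result by absolute continuity together with the fact that the limit is mixing / stable convergence. Then $\Phi(x)-x^\alpha S_\alpha$ becomes a centered Poisson functional $\int(1-e^{-xy/T})(N(dy)-\mu(dy))$. The factor $(\alpha)$-scaling shows its variance at level $x=nt$ is $\propto n^\alpha S_\alpha\int(1-e^{-tu})(1-e^{-su})\alpha u^{-\alpha-1}du/\Gamma(1-\alpha)$, which computes to $S_\alpha(s^\alpha+t^\alpha-(s+t)^\alpha)$. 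A Lindeberg CLT for Poisson integrals gives fidi convergence to $S_\alpha^{1/2}Z_\alpha\topp2$. I will obtain tightness from a moment bound on increments, e.g.\ $\esp|Y_n(t)-Y_n(s)|^2\lesssim |t-s|^{\alpha}$ plus monotonicity of $\Phi$ to control the $D$-modulus (a Bickel–Wichura-type chaining argument, exploiting that $\Phi$ is increasing and $x^\alpha S_\alpha$ is smooth).

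The third step assembles the joint statement, which I expect to be the main obstacle since it requires stable convergence rather than merely marginal convergence. Since the $W_n$ convergence is quenched, for bounded continuous $f,g$ I will write $\esp[f(W_n)g(Y_n)]=\esp[g(Y_n)\esp(f(W_n)\mid\calP)]$. Then $\esp(f(W_n)\mid\calP)\to \varphi(S_\alpha):=\esp f(s^{1/2}Z_\alpha\topp1)|_{s=S_\alpha}$ almost surely, so it suffices to have the joint convergence $(Y_n,S_\alpha)\weakto(S_\alpha^{1/2}Z_\alpha\topp2,S_\alpha)$ with $Z\topp2_\alpha$ independent of $S_\alpha$. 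In other words, I need $\calP$-stable (mixing) convergence of $Y_n$ with respect to $S_\alpha$. I plan to get this in the Poisson representation by showing the fluctuation at scale $n$ is asymptotically determined by small jumps of size $\asymp 1/n$, while $S_\alpha$ (and $T$) depend on the tail $\lim_j j(P_j^\downarrow)^\alpha$. Truncating the jumps of size in $[\epsilon_n,\delta]$ versus the rest then yields asymptotic independence, and the normalization is random only through $S_\alpha$. The delicate point will be handling the dependence induced by dividing by $T$ and the tilt for $\theta\neq0$.
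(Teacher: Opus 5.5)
Your outer architecture is sound, and in one respect leaner than the paper's. Since $Y_n$ is $\calP$-measurable and \eqref{eq:DW16} is quenched, Theorem~\ref{thm:1} does reduce to $(Y_n,S_\alpha)\weakto(S_\alpha^{1/2}Z_\alpha\topp2,S_\alpha)$, with $Z_\alpha\topp2$ independent of $S_\alpha$. This joint (stable) convergence also survives the $S_\alpha$-measurable change of measure from $\theta=0$ to $\theta\ne0$: truncate the density and use its integrability. So for Theorem~\ref{thm:1} you do not need the paper's stronger statement, quenched with respect to $\calP_n$.

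The gap is the core step, which you call ``delicate'' but do not supply. In Step~2, $\Phi(x)-x^\alpha S_\alpha=\int(1-e^{-xy/T})(N-\mu)(\d y)$ is not a Poisson integral with deterministic integrand, because $T=\int yN(\d y)$. So no Lindeberg CLT applies, and your variance is only a heuristic conditional variance. Conditioning on $T$ does not help, since it destroys the Poisson structure.

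Everything therefore hinges on the decoupling, which is only gestured at. Independence does not come from $S_\alpha$ being a tail functional of the ranked sequence: that tail is not separated from the indices $j\asymp n^\alpha$ that drive the fluctuations. It comes from $T$ being carried by the \emph{large} jumps. Even then, the truncation cannot be done naively. Let $T_n$ be the sum of the jumps above $\eta_n$. You need $n\eta_n\to\infty$ so that the large-jump fluctuations, of order $\eta_n^{-\alpha/2}$, are $o(n^{\alpha/2})$. But then replacing $T$ by $T_n$ in the centering alone shifts it by order $n^{\alpha}\eta_n^{1-\alpha}$, which is not $o(n^{\alpha/2})$ when $\alpha\ge 2/3$.

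The missing idea is to make the replacement simultaneously in the random sum and in the centering, i.e.\ in the argument $x/T$ of the centered process $\Psi(u)=\int(1-e^{-uy})(N-\mu)(\d y)$. The error is then an increment of $n^{-\alpha/2}\Psi(n\,\cdot)$ over a short but \emph{dependent} random interval, and it is controlled uniformly by a modulus of continuity. This is what Lemma~\ref{lem:decouple ND} does, in a cleaner parametrization. Using $\nu(t)=N(Dt^\alpha)$, the paper replaces $D$ by $D_n$, which is built from the arrivals $\Gamma_j\le\epsilon_nn^\alpha$ (the large jumps) and is $\calP_n$-measurable. It bounds $\wb N_n(Dt^\alpha)-\wb N_n(D_nt^\alpha)$ by the modulus of continuity of $\wb N_n$, which is blind to the dependence between $D$ and $N$. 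It then uses that the shifted process $\what N_n$ is independent of $\calP_n$.

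Your tightness plan also fails as stated. A bound $\esp|Y_n(t)-Y_n(s)|^2\le C|t-s|^\alpha$ with $\alpha<1$ does not meet Kolmogorov--Chentsov or Billingsley-type criteria. Monotonicity of $\Phi$ does not rescue it, because the drift $n^{\alpha/2}S_\alpha(t^\alpha-s^\alpha)$ forces a bracketing grid of mesh $o(n^{-\alpha/2})$. Any sharper moment bound would again have to confront the $T$-dependence. The paper needs no increment moment bounds: $\wt Y_n(t)=\int_0^\infty e^{-x}\wb N_n(D(t/x)^\alpha)\,\d x$ is a smoothing functional of the time-changed Poisson process. After decoupling, both finite-dimensional convergence and tightness follow from the Poisson functional CLT, with Doob's inequality handling $x$ near $0$.

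A minor slip: the de-Poissonization error is $O(m^{\alpha-1})$, not $O(m^{2\alpha-1})$. Your stated order would fail to be $o(n^{\alpha/2})$ for $\alpha\ge2/3$. The paper simply uses $0\le G_m-F(m)\le F(m)/m\le F(1)\le 1$.
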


Moreover, we shall establish a quenched version of \eqref{eq:joint CLT} in Theorem \ref{thm:2}, from which Theorem \ref{thm:1} follows as an immediate consequence. The $\sigma$-algebras ($\calP_n$ below) involved in Theorem \ref{thm:2} would take a little preparation to introduce, and we skip the details in the introduction here.

A quenched functional central limit theorem for the first component $W_n$ has already been established in \citet[Theorem 2.3]{durieu16infinite} (in combination with Kingman's representation theorem). Namely, it was shown that
\equh\label{eq:DW16}
\pp{W_n(t)}_{t\in[0,1]}\equiv \pp{\frac{K_{\floor{nt}}-\esp(K_{\floor{nt}}\mid\calP)}{n^{\alpha/2}}}_{t\in[0,1]} \aswto \pp{S_\alpha^{1/2}Z_\alpha\topp1(t)}_{t\in[0,1]}
\eque
in $D[0,1]$ with respect to $\calP$ as $n\to\infty$, where $S_\alpha$ is $\calP$-measurable and $Z_\alpha\topp1$ is independent from $\calP$.  Here, the notation $\aswto$ stands for {\em almost sure weak convergence} \citep{grubel16functional}; for the sake of simplicity we also refer to $\aswto$ as a {\em quenched} convergence. We say a sequence of random elements $(X_n)_{n\in\N}$ converges almost surely weakly to $X$ in a certain Polish space $M$ with respect to a $\sigma$-algebra $\calG$, if for all continuous and bounded functions $f:M\to\R$ we have
\[
\limn \esp(f(X_n)\mid\calG) = \esp(f(X)\mid\calG), \mbox{ almost surely.}
\]
When writing $X_n\aswto X$ with respect to $\calG$, implicitly we assume all $(X_n)_{n\in\N}$ and $X$ to be defined on a probability space of which $\calG$ is a $\sigma$-algebra; this assumption is needed for the conditional expectations above to be well-defined. This is different from the interpretation of weak convergence, when only the laws of the random variables are concerned and hence the underlying probability space is irrelevant.

In particular, in all our statements regarding quenched convergence, all the pre-limit statistics $W_n(t),Y_n(t)$ and $S_\alpha$ (in the pre-limit statistic and in the limit process) are defined on a common probability space.

We comment briefly on the proof. It is well-known that by Kingman's representation theorem, the study of exchangeable random partitions with asymptotic frequencies $(P_j)_{j\in\N}$ can be translated into the study of random partitions induced by an infinite urn scheme (paintbox partitions) with sampling frequencies $(P_j)_{j\in\N}$, which in decreasing order decay at a polynomial rate with tail index $-1/\alpha$. 
In the language of Bayesian nonparametrics, the latter construction gives rise to random partitions generated by i.i.d.~sampling from the random frequencies, whose decreasing arrangement follows the Poisson--Dirichlet distribution with parameters $(\alpha,\theta)$ (and the Chinese restaurant process is involved in practice); a closely related object is the Pitman--Yor process, taking the form of a random probability measure $\sif j1 P_j\delta_{V_j}$ where $V_j$ are i.i.d.~from a base distribution, independent from $(P_j)_{j\in\N}$, and when $\alpha=0$ this is  known as the Dirichlet process.

The infinite urn scheme is another fundamental model in probability theory with early developments dating back to the 1960s \citep{bahadur60number,karlin67central,darling67some}; see \citet{gnedin07notes} and references therein for early developments. 
Some recent works on the component $W_n(t)$ that essentially exploit the (conditional) i.i.d.~structure include \citet{durieu16infinite,chebunin16functional,iksanov22functional}.
In words, most of the previous analysis exploiting the connection to the infinite urn scheme only needs the assumption that the asymptotic frequencies decay polynomially {\em almost surely}; that is,  $P_j^\downarrow\sim d_0j^{-1/\alpha}$ as $j\to\infty$. The Gaussian fluctuation in the limit on the first component $W_n$ is due to the sampling procedure {\em given the sampling frequencies}, and the deviation of $P_j^\downarrow$ from $d_0j^{-1/\alpha}$ is not captured in the limit fluctuation (in fact, the deviation is not considered when setting up the question). This part is essentially due to Karlin, who first extensively investigated limit theorems for an infinite urn scheme with polynomially decaying sampling frequencies.  The model with such sampling frequencies is referred to as the Karlin model in the literature.

Here, our analysis essentially examines the fluctuations of $P_j^\downarrow$ around $d_0j^{-1/\alpha}$. This fluctuation leads to the Gaussian fluctuation in the second component $Y_n(t)$ of the main result. This result is quite different from all the aforementioned ones on the Karlin model. At the heuristic level it is clear that the two fluctuations should be conditionally independent. Our Theorem \ref{thm:2} explains this in more detail by providing a specific choice of the $\sigma$-algebra $\calP_n$ to condition on. On the other hand, it is remarkable  that the approach by \citet{bercu24martingale} does not exploit the urn scheme connection at all, although it is not clear whether it could deal with $\esp (K_n\mid\calP)$ in order to have access to the limit of the decomposition or the quenched convergence.

\begin{remark}
The decomposition \eqref{eq:decomposition} appeared already in \citet{durieu16infinite}, and it is worth pointing out that a corresponding functional central limit theorem for the decomposition was established. The model investigated therein was the Karlin model (not necessarily the Chinese restaurant process) randomized by Rademacher random variables. Stochastic-integral representations of the processes $Z_\alpha\topp1,Z_\alpha\topp2$ can be found in \citet{fu20stable}. 
\end{remark}
\begin{remark}
Our methodology can be further extended to study the total number of components in $\Pi_n$ with exactly $j$ elements, denoted by $C_{n,j}$ below. In the language of random permutations $C_{n,j}$ is referred to as the $j$-cycle count in the literature. It is well-known that
\[
\limn \frac{C_{n,j}}{n^\alpha} = S_\alpha p\topp\alpha_j:=S_\alpha\frac{\alpha\Gamma(j-\alpha)}{\Gamma(1-\alpha)\Gamma(j+1)}, \mbox{ almost surely, for $j\in\N$,}
\]
and $(p_j\topp\alpha)_{j\in\N}$ is the probability mass function of the $\alpha$-Sibuya distribution.
Central limit theorems regarding $C_{n,j}$ have also been studied in the literature. Again, the choice of the centering is delicate, and we could write
\[
\frac{C_{n,j}-S_\alpha p\topp\alpha_jn^\alpha}{n^{\alpha/2}} = \frac{C_{n,j}-\esp(C_{n,j}\mid\calP)}{n^{\alpha/2}}+\frac{\esp(C_{n,j}\mid\calP)-S_\alpha p\topp\alpha_jn^\alpha}{n^{\alpha/2}}.
\]
The convergence of the left-hand side above  was established in \citet{bercu24martingale}, and the quenched joint convergence of the first statistic on the right-hand side was established
 in \citet{karlin67central,chebunin16functional}  (the convergence is joint for random variables indexed by $j\in\N$, and the joint convergence was in fact established for a functional version with $n$ replaced by $\floor{nt}$). This extension was recently addressed by \citet{garza26second}, who investigated this question both for $j$ fixed and for $j = j_n\to\infty$ at an appropriate rate; in fact, depending on the rate $j_n\to\infty$ a phase transition was revealed therein (see also \citet{banderier24phase}).  In fact, we expect a more general quenched functional central limit theorem for the following sequence of bivariate processes
 \[
 \pp{\frac1{n^{\alpha/2}}\sif j1 a_j\pp{C_{\floor{nt},j}-\esp(C_{\floor{nt},j}\mid\calP)}, \frac1{n^{\alpha/2}}\sif j1 b_j\pp{\esp(C_{\floor{nt},j}\mid\calP)-S_\alpha p\topp\alpha_j(\floor{nt})^\alpha}}_{t\in[0,1]}
 \]
 for suitable constants $(a_j)_{j\in\N}, (b_j)_{j\in\N}$. The quenched convergence of the first component has been shown in \citet{garza25functional}, with motivations from random permutation matrices; therein statistics of interest often take the form of $\sif j1 a_jC_{n,j}$ \citep{garza24limit}. The challenge is to find a not-too-restrictive condition on $(a_j)_{j\in\N}$ and $(b_j)_{j\in\N}$ when establishing the tightness. This is left for future research. 
\end{remark}

\begin{remark}
The central limit theorem for $K_n$ for the case $\alpha=0$ is well known. See \citet{arratia03logarithmic}. Note that \citet{hansen90functional} established a functional central limit theorem, although the way the parameter $t$ was introduced there is not the same. 
\end{remark}

\begin{remark}\label{rem:literature}
We mention a few more related results in the literature. For a general class of infinite urn models with random sampling frequencies known as the Bernoulli sieve (where $P_j = (1-\xi_1)\cdots(1-\xi_{j-1})\xi_j$ with i.i.d.~random variables $(\xi_k)_{k\in\N}$ taking values in $(0,1)$), the same decomposition was investigated by \citet{gnedin10limit}. Therein, the random frequencies were viewed as a random environment, the fluctuations from which were shown to be dominant in the limit under a suitable assumption on $\xi_1$. Note that the Bernoulli sieve includes $(0,\theta)$-partitions, but not the $(\alpha,\theta)$-ones with $\alpha\in(0,1)$. A more sophisticated urn model with random sampling frequencies was studied in \citet{gnedin12regenerative}, where the random frequencies are related to a subordinator with a L\'evy measure slowly varying at zero and again the random environment has the dominant contribution to the limit fluctuations. At a higher level, we mention \citet{iksanov17asymptotics}, who investigated a random process with immigration at the epochs of a renewal process. The fluctuations of the process have two sources, one from the immigration and the other from the underlying renewal process determining when the immigration occurs. For a certain range of the parameters, it was shown that the two parts are of comparable orders and both contribute in the limit. 
\end{remark}

The paper is organized as follows. In Section \ref{sec:CRP} we provide preliminary results on the $(\alpha,\theta)$-partitions. In Section \ref{sec:proof} we state and prove the stronger quenched functional central limit theorem in Theorem \ref{thm:2}.


\section{Preliminary results on the Chinese restaurant process}\label{sec:CRP}

We first recall the Chinese restaurant process. The standard reference is \citet{pitman06combinatorial}.  The process  has two parameters $(\alpha,\theta)$ with $\alpha\in[0,1)$ and $\theta>-\alpha$ and consists of a family of exchangeable random partitions $(\Pi_n)_{n\in\N}$, each of $[n]=\{1,\dots,n\}$, constructed consecutively. 
 The procedure goes as follows. Set $\Pi_1 = \{\{1\}\}$. Suppose $\Pi_n = \{\Pi_{n,1},\dots,\Pi_{n,k}\}$ ($(\Pi_{n,j})_{j=1,\dots,k}$ are disjoint non-empty subsets of $[n]$ and $\bigcup_{j=1}^k \Pi_{n,j} = [n]$; in this case $\Pi_n$ is said to have $k$ components). Then, the partition $\Pi_{n+1}$ is obtained by 
\begin{enumerate}
\item[(i)] adding element $n+1$ to an existing block $j$ (i.e., setting $\Pi_{n+1,j} := \Pi_{n,j}\cup \{n+1\}$) with probability $(|\Pi_{n,j}|-\alpha)/(n+\theta)$; 
\item[(ii)] creating a new block with a single element $n+1$ (i.e., setting $\Pi_{n+1,k+1} := \{n+1\})$ with probability $(k\alpha+\theta)/(n+\theta)$;
\item[(iii)] all other existing blocks remain unchanged (i.e., setting $\Pi_{n+1,j} = \Pi_{n,j}$, for all $j=1,\dots,k$ that have not been involved). 
\end{enumerate}
The statistic of interest is the total number of components of the partition $\Pi_n$, denoted by $K_n$ throughout. To state our main result, the asymptotic frequencies are involved. It is well-known that
\[
P_j\equiv P_j\topp{\alpha,\theta}:=\limn \frac{|\Pi_{n,j}|}n \quad \mbox{  exists almost surely,}
\]
for every $j\in\N$.
The law of $(P_j)_{j\in\N}$ is known as the Griffiths--Engen--McCloskey (GEM) distribution with parameters $(\alpha,\theta)$. For a recent generalization, see \citet{basrak25generalized}.
The law of the decreasingly ordered sequence $(P_j^\downarrow)_{j\in\N}$ is known as the Poisson--Dirichlet distribution with parameters $(\alpha,\theta)$, denoted by $\mathsf P_{\alpha,\theta}$ below \citep{feng10poisson}.  Recall the definition of $S_\alpha$ in \eqref{eq:diversity}. 

It is well-known that $P_j^\downarrow\sim d_0 j^{-1/\alpha}$ with $d_0 = (S_\alpha/\Gamma(1-\alpha))^{1/\alpha}$. This was first established via an intrinsic relation to the jumps of an $\alpha$-stable subordinator. We need to exploit this fact further to have an estimate on the deviation of $P_j^\downarrow$ from $d_0j^{-1/\alpha}$. The following can be read from \citet{pitman06combinatorial}. 
\begin{lemma}\label{lem:P_j}
Let $(P_j^\downarrow)_{j\in\N}$ follow the Poisson--Dirichlet distribution with parameters $(\alpha,\theta)$, and $S_\alpha$ be as in \eqref{eq:diversity}. Set
\equh\label{eq:Gamma_j}
\Gamma_j:=\frac{S_{\alpha}}{\Gamma(1-\alpha)}\pp{P_j^\downarrow}^{-\alpha}, j\in\N.
\eque
\begin{enumerate}
\item[(i)] When $\theta=0$, the sequence $(\Gamma_j)_{j\in\N}$ has the law of consecutive arrival times of a standard Poisson process.
\item[(ii)] More generally for all $\theta>-\alpha$, the law of $\mathsf P_{\alpha,\theta}$ is absolutely continuous with respect to $\mathsf P_{\alpha,0}$. As a consequence, all the almost sure statements regarding $\mathsf P_{\alpha,0}$ remain valid for $\mathsf P_{\alpha,\theta}$.
\end{enumerate}
In particular, it follows from \eqref{eq:diversity} and \eqref{eq:Gamma_j} that, for all $\alpha\in(0,1)$ and $\theta>-\alpha$, as $j\to\infty$
\[
P_j^\downarrow =  \pp{\frac{S_{\alpha}}{\Gamma(1-\alpha)}}^{1/\alpha} \Gamma_j^{-1/\alpha}    \sim \pp{\frac{S_{\alpha}}{\Gamma(1-\alpha)}}^{1/\alpha} j^{-1/\alpha}, \mbox{ almost surely.}
\]
\end{lemma}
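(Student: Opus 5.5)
The plan is to derive everything from the representation of $\mathsf P_{\alpha,0}$ through the jumps of a stable subordinator, as in \citet[Chapter 3]{pitman06combinatorial}, and then to pass to general $\theta$ by absolute continuity. For (i), let $(T_s)_{s\ge0}$ be a standard $\alpha$-stable subordinator with L\'evy measure $\Lambda(\d x)=\frac{\alpha}{\Gamma(1-\alpha)}x^{-\alpha-1}\d x$, so that $\Lambda((x,\infty))=x^{-\alpha}/\Gamma(1-\alpha)$. Let $J_1>J_2>\cdots$ be the ranked jumps on $[0,1]$. By the Poisson point process representation, $\Lambda((J_j,\infty))=J_j^{-\alpha}/\Gamma(1-\alpha)$ are the arrival times $\Gamma_1<\Gamma_2<\cdots$ of a standard Poisson process, because the map $x\mapsto\Lambda((x,\infty))$ carries the Poisson random measure of jumps to a unit-rate Poisson process on $(0,\infty)$. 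We therefore have $J_j=(\Gamma(1-\alpha)\Gamma_j)^{-1/\alpha}$.

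The key fact from Pitman (Proposition 3.21 / Section 4.2 there) is that $P_j^\downarrow=J_j/T_1$ has law $\mathsf P_{\alpha,0}$. Combined with the law of large numbers for Poisson arrivals, $\Gamma_j/j\to1$, this gives
\[
j(P_j^\downarrow)^\alpha\Gamma(1-\alpha)=\frac{j}{\Gamma_j}\,T_1^{-\alpha}\to T_1^{-\alpha}.
\]
Hence, by \eqref{eq:diversity}, $S_\alpha=T_1^{-\alpha}$ almost surely. Substituting into \eqref{eq:Gamma_j} gives
\[
\frac{S_\alpha}{\Gamma(1-\alpha)}(P_j^\downarrow)^{-\alpha}=\frac{T_1^{-\alpha}}{\Gamma(1-\alpha)}\,J_j^{-\alpha}T_1^{\alpha}=\Gamma_j .
\]
So the sequence defined in \eqref{eq:Gamma_j} coincides almost surely with the Poisson arrival times, and its law is as claimed in (i).

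For (ii), I would invoke the known change-of-measure formula (Pitman, Theorem 3.13 / Proposition 3.21 in the Poisson--Kingman framework). It states that $\mathsf P_{\alpha,\theta}$ is the $\mathsf P_{\alpha,0}$-law reweighted by a density proportional to $S_\alpha^{\theta/\alpha}$. Equivalently, the density is proportional to $T_1^{-\theta}$, normalized by $\esp T_1^{-\theta}=\Gamma(\theta/\alpha+1)/\Gamma(\theta+1)$, which is finite exactly when $\theta>-\alpha$. This density is strictly positive and finite almost surely, so $\mathsf P_{\alpha,\theta}$ is absolutely continuous with respect to $\mathsf P_{\alpha,0}$, and every $\mathsf P_{\alpha,0}$-null set is $\mathsf P_{\alpha,\theta}$-null.

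The final asymptotic statement then follows. The identity $P_j^\downarrow=(S_\alpha/\Gamma(1-\alpha))^{1/\alpha}\Gamma_j^{-1/\alpha}$ is just \eqref{eq:Gamma_j} rearranged. The relation $\Gamma_j\sim j$ almost surely holds under $\mathsf P_{\alpha,0}$ by (i) and the strong law of large numbers, and it transfers to every $\theta>-\alpha$ through (ii). One subtlety is that $S_\alpha$ must be a measurable functional of the ranked sequence, namely the limit in \eqref{eq:diversity}, so that the null set in question is a null set on sequence space. This holds because the limit defining $S_\alpha$ is itself a measurable functional of $(P_j^\downarrow)$.

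The main obstacle is step (ii): the absolute continuity of $\mathsf P_{\alpha,\theta}$ with respect to $\mathsf P_{\alpha,0}$ and the exact form of its density. This step is imported from Pitman rather than proved here. It suffices that the density is a.s. positive and finite, and this holds because $\esp T_1^{-\theta}<\infty$ precisely for $\theta>-\alpha$.
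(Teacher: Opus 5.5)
Your proposal is correct and follows essentially the paper's route. For (i) you realize $\mathsf P_{\alpha,0}$ through the normalized ranked points of a stable subordinator, identify $S_\alpha=T_1^{-\alpha}$ as a deterministic functional of the ranked sequence, and thereby recover the Poisson arrival times from \eqref{eq:Gamma_j}; for (ii) you import the density $\frac{\Gamma(\theta+1)}{\Gamma(\theta/\alpha+1)}S_\alpha^{\theta/\alpha}$ from Pitman, exactly as the paper does. The only difference is cosmetic: you argue pathwise on a subordinator realization and then transfer via measurability of that functional, whereas the paper states the same fact directly as an equality in distribution.
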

\begin{proof}
We first prove part (i). Assume $\theta=0$. Let $(\wt\Gamma_j)_{j\in\N}$ denote the consecutive sequence of arrival times of a standard Poisson process.  Recall that one can express the law of the ordered asymptotic frequencies $(P_j^\downarrow)_{j\in\N}$ in terms of jumps from an $\alpha$-stable subordinator: 
\[
\pp{P_j^\downarrow}_{j\in\N}\eqd\pp{\frac{\wt\Gamma_j^{-1/\alpha}}{\sif k1\wt\Gamma_k^{-1/\alpha}}}_{j\in\N}.
\]
See \citet{pitman97two,perman92size}. The above then implies that
\[
\pp{\pp{P_j^\downarrow}_{j\in\N},\lim_{j\to\infty}j(P_j^\downarrow)^\alpha \Gamma(1-\alpha)}\eqd \pp{\pp{\frac{\wt\Gamma_j^{-1/\alpha}}{\sif k1\wt\Gamma_k^{-1/\alpha}}}_{j\in\N},\Gamma(1-\alpha)\pp{\sif k1\wt\Gamma_k^{-1/\alpha}}^{-\alpha}},
\]
since on both sides the second random variable is the same deterministic transform of the first random sequence. It follows that 
\[
\pp{ \pp{\frac{S_{\alpha}}{\Gamma(1-\alpha)}}^{-1/\alpha}P_j^\downarrow}_{j\in\N}\eqd \pp{\wt\Gamma_j^{-1/\alpha}}_{j\in\N},
\]
as claimed.

Part (ii) is well-known \citep[Chapter 3.3]{pitman06combinatorial}, for which we briefly recall the density formula. For general $\theta>-\alpha$, suppose  $(P_j^\downarrow)_{j\in\N}$ has law $\mathsf P_{\alpha,0}$, and let $\proba$ denote the probability measure on the probability space. Then, on the same probability space, consider  the measure $\mathbb Q_{\alpha,\theta}$ determined by the following change of measure
\equh\label{eq:Q}
\frac{\d \mathbb Q_{\alpha,\theta}}{\d \proba}(\omega) = \frac{\Gamma(\theta+1)}{\Gamma(\theta/\alpha+1)}S_{\alpha}^{\theta/\alpha}.
\eque
It is known that $\mathbb Q_{\alpha,\theta}$ is a probability measure and  $(P_j^\downarrow)_{j\in\N}$ under $\mathbb Q_{\alpha,\theta}$ has the law $\mathsf P_{\alpha,\theta}$.
\end{proof}

\section{A quenched functional central limit theorem}\label{sec:proof}

We shall prove a stronger quenched functional central limit theorem for $(W_n,Y_n)_{n\in\N}$ with $W_n = (W_n(t))_{t\in[0,1]}$ and $Y_n = (Y_n(t))_{t\in[0,1]}$, from which Theorem \ref{thm:1} follows immediately.  For this purpose,  we shall rely on the representation of asymptotic frequencies $(P_j^\downarrow)_{j\in\N}$ developed in Lemma \ref{lem:P_j}. Set random variables
\[
\Gamma_j:=\frac{S_\alpha}{\Gamma(1-\alpha)}\pp{P_j^\downarrow}^{-\alpha}, j\in\N,
\]
and
\equh\label{eq:N(t)}
N(t):=\max\{n\in\N: \Gamma_n\le t\}, t\ge 0,
\eque
and $\max\emptyset=0$ by convention. Set, for a sequence of decreasing numbers $(\epsilon_n)_{n\in\N}, \epsilon_n\downarrow 0$ as $n\to\infty$,  
\[
\calP_n:=\sigma(N(t):t\in[0, \epsilon_n n^\alpha]).
\]
Clearly, $\calP_n\subset\sigma((P_j^\downarrow)_{j\in\N})\subset\calP$. Lemma \ref{lem:P_j} shows that when $\theta=0$, $(N(t))_{t\ge 0}$ in \eqref{eq:N(t)} is a standard Poisson process with $(\Gamma_j)_{j\in\N}$ its consecutive arrival times, while for other $\theta>-\alpha$ this representation does not hold directly, although the corresponding law is obtained by a change of measure. 

We shall then prove the following quenched version of Theorem \ref{thm:1}. 
\begin{theorem}\label{thm:2}
Let $W_n$ and $Y_n$ be as in Theorem \ref{thm:1}, and assume that 
\equh\label{eq:epsilon_n}
\epsilon_nn^{\alpha}\uparrow\infty \qmand \epsilon_n\log\log n\to 0,
\eque
as $n\to\infty$. We have
\[
\calL\pp{\pp{(W_n(t))_{t\in[0,1]},(Y_n(t))_{t\in[0,1]}}\mmid\calP_n}\aswto\calL\pp{\pp{S_\alpha^{1/2}(Z_\alpha\topp1(t))_{t\in[0,1]},S_\alpha^{1/2}(Z_\alpha\topp2(t))_{t\in[0,1]}}\mmid\calP}
\]
in $D[0,1]^2$ as $n\to\infty$, where $Z_\alpha\topp1,Z_\alpha\topp2$ are independent from $\calP$.
\end{theorem}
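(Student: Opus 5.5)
First reduce to $\theta=0$. By Lemma \ref{lem:P_j}(ii) and \eqref{eq:Q}, $\mathsf P_{\alpha,\theta}$ has density $cS_\alpha^{\theta/\alpha}$ with respect to $\mathsf P_{\alpha,0}$. Since $S_\alpha$ is $\calP$-measurable, quenched statements with respect to $\calP$ transfer directly. For conditioning on $\calP_n$ the transfer is less immediate. Because $\calP_n\uparrow$ and $\bigvee_n\calP_n\supset\sigma(N)$, with $S_\alpha=\lim_t N(t)/t\cdot$const measurable with respect to the tail, one has $\esp(S_\alpha^{\theta/\alpha}\mid\calP_n)\to S_\alpha^{\theta/\alpha}$ a.s. by martingale convergence. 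I would instead avoid this by conditioning first on $\calP$ for $W_n$, and then handling $Y_n$, whose randomness is $\calP$-measurable, as described below. So assume $\theta=0$, so that $N$ is a standard Poisson process. By Kingman's representation, given $\calP$, $K_m$ is the occupancy count of an i.i.d. sample from $(P_j^\downarrow)$, and
\[
\esp(K_m\mid\calP)=\sum_j\bigl(1-(1-P_j^\downarrow)^m\bigr).
\]

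Next, split the problem. $W_n$ depends on the sampling given $\calP$, while $Y_n$ is $\calP$-measurable. Hence for bounded continuous $f,g$,
\[
\esp(f(W_n)g(Y_n)\mid\calP_n)=\esp\bigl(\esp(f(W_n)\mid\calP)\,g(Y_n)\mid\calP_n\bigr).
\]
By \eqref{eq:DW16}, $\esp(f(W_n)\mid\calP)\to \esp(f(S_\alpha^{1/2}Z^{(1)}_\alpha)\mid\calP)=:\phi(S_\alpha)$ a.s., and boundedness lets this error vanish under conditional expectation. Strictly, one needs $\esp(|\cdot|\mid\calP_n)\to0$ a.s.; this follows from a Hunt-type lemma, since the errors are dominated by $\sup_{m\ge n}$ of the errors, which tends to $0$, and $\calP_n$ is increasing. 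It therefore suffices to prove the quenched CLT
\[
\calL(Y_n\mid\calP_n)\aswto \calL(S_\alpha^{1/2}Z^{(2)}_\alpha\mid\calP),
\]
together with $\phi(S_\alpha)$ being nearly $\calP_n$-measurable. For the latter, $S_\alpha$ is determined by $N$; since $\theta=0$ and $P^\downarrow_j$ is built from $\Gamma_j/\sum$, we have $S_\alpha=\Gamma(1-\alpha)(\sum_k\Gamma_k^{-1/\alpha})^{-\alpha}$. This is not $\calP_n$-measurable, so a further argument is needed: the contribution of arrivals beyond $\epsilon_nn^\alpha$ to $\sum_k\Gamma_k^{-1/\alpha}$ is of order $(\epsilon_n n^\alpha)^{1-1/\alpha}\to0$, so $S_\alpha$ is asymptotically $\calP_n$-measurable.

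For $Y_n$ I would write the centered quantity as a Poisson integral. With $P_j^\downarrow=c\Gamma_j^{-1/\alpha}$, $c=(S_\alpha/\Gamma(1-\alpha))^{1/\alpha}$, $m=\floor{nt}$, define $h_m(x)=1-(1-cx^{-1/\alpha})^m\approx 1-e^{-mcx^{-1/\alpha}}$. Then $\esp(K_m\mid\calP)=\int h_m\,dN$, while $\int_0^\infty(1-e^{-mcx^{-1/\alpha}})dx=\Gamma(1-\alpha)c^\alpha m^\alpha=m^\alpha S_\alpha$. Thus, up to negligible errors from $(1-p)^m$ versus $e^{-mp}$ (summable for small $p$, since the first $O(1)$ terms are bounded) and the rounding of $\floor{nt}$,
\[
Y_n(t)\approx n^{-\alpha/2}\int_0^\infty \bigl(1-e^{-\floor{nt}cx^{-1/\alpha}}\bigr)\,(dN(x)-dx).
\]
The integrand is of order one for $x\lesssim n^\alpha$. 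Split the integral at $\epsilon_nn^\alpha$. The part on $[0,\epsilon_nn^\alpha]$ is $\calP_n$-measurable and has variance $O(\epsilon_nn^\alpha)$; the law of the iterated logarithm gives $|N(x)-x|=O(\sqrt{x\log\log x})$, so after integration by parts this part is $o(1)$ a.s. uniformly in $t$, because $\epsilon_n\log\log n\to0$. The part on $(\epsilon_nn^\alpha,\infty)$ is, given $\calP_n$, a compensated Poisson integral independent of $\calP_n$ by the independent-increments property, with $c$ replaced by its asymptotically $\calP_n$-measurable approximation. Its finite-dimensional conditional laws converge to Gaussian with covariance
\[
n^{-\alpha}\int(1-e^{-nscx^{-1/\alpha}})(1-e^{-ntcx^{-1/\alpha}})dx=c^\alpha\Gamma(1-\alpha)(s^\alpha+t^\alpha-(s+t)^\alpha),
\]
that is, $S_\alpha\cov(Z^{(2)}_\alpha(s),Z^{(2)}_\alpha(t))$. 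This convergence can be shown via the characteristic functional of Poisson integrals; jumps are $O(n^{-\alpha/2})$. Conditional tightness would follow from moment bounds $\esp(|Y_n(t)-Y_n(s)|^4\mid\cdot)\lesssim |t^\alpha-s^\alpha|^2+n^{-\alpha}|t-s|$ plus monotonicity in $t$ of $\esp(K_m\mid\calP)$, in the manner of \citet{durieu16infinite}.

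The main obstacle I expect is making everything almost sure rather than in probability. This concerns the uniform negligibility of the $[0,\epsilon_nn^\alpha]$ part and the replacement of $c$ (which depends on the whole of $N$) by a $\calP_n$-measurable proxy, simultaneously in the tightness estimate; this is exactly where the rates in \eqref{eq:epsilon_n} and the LIL enter. The change of measure back to general $\theta$ should also be checked carefully at the level of conditional laws given $\calP_n$.
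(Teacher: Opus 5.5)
Your architecture matches the paper's. You dispose of $W_n$ by conditioning on $\calP$, using \eqref{eq:DW16} and conditional dominated convergence. You replace $S_\alpha$ by the $\calP_n$-measurable $S_n$, and write $\esp(K_m\mid\calP)-m^\alpha S_\alpha$ as a compensated integral against $N$. You kill $[0,\epsilon_nn^\alpha]$ with the LIL; your integration-by-parts bound $2\Delta_n$ is uniform in $t$ and in $c$, so that part is fine. You then use independent increments beyond $\epsilon_nn^\alpha$ and compare $(1-p)^m$ with $e^{-mp}$.

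\textbf{The gap.} It is the clause ``with $c$ replaced by its asymptotically $\calP_n$-measurable approximation''. This is precisely the step on which the paper spends most of its effort (Lemma~\ref{lem:decouple ND}). Because $c=(\sum_j\Gamma_j^{-1/\alpha})^{-1}$ also depends on $N$ on $(\epsilon_nn^\alpha,\infty)$, consider the replacement error
\[
E_n(t)=n^{-\alpha/2}\int_{\epsilon_nn^\alpha}^\infty\pp{e^{-\floor{nt}c_nx^{-1/\alpha}}-e^{-\floor{nt}cx^{-1/\alpha}}}\bigl(\d N(x)-\d x\bigr).
\]
Its integrand anticipates its own integrator. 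So the Poisson characteristic functional and your conditional variance and fourth-moment computations given $\calP_n$ do not apply to it, and $c_n\to c$ a.s.\ alone does not make it small.

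The LIL, which your last paragraph invokes, does not close the gap under the stated hypotheses. Integrating by parts yields only $\sup_t|E_n(t)|=O(|c-c_n|\sqrt{\log\log n})$ with $|c-c_n|\asymp(\epsilon_nn^\alpha)^{1-1/\alpha}$, and \eqref{eq:epsilon_n} allows $\epsilon_nn^\alpha$ to grow arbitrarily slowly. This bound would suffice if $\epsilon_nn^\alpha$ grew polynomially. You would then still need a reduction of general $\epsilon_n$ to that case, e.g.\ enlarging $\calP_n$ and using the tower property with conditional dominated convergence, and you do not give one.

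\textbf{The paper's fix.} The paper makes $D=c^\alpha$ enter only as a time change, $\wt Y_n(t)=\int_0^\infty e^{-x}\wb N_n(D(t/x)^\alpha)\,\d x$. It then splits $\wb N_n(Dt^\alpha)=\what N_n(D_nt^\alpha)+R_n(t^\alpha)$, where $\what N_n(s)=\wb N_n(\epsilon_n+s)-\wb N_n(\epsilon_n)$ is independent of $\calP_n$. On the event $A_{n,\delta}$ (where $|D_n-D|<\delta$ and $\epsilon_n<\delta\wedge\Gamma_1$), it bounds $R_n$ \emph{pathwise} by a multiple of $\Delta_n+\what N_n^*(\delta)+\omega_{\what N_n}(\Gamma_1,\delta)$. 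This bound holds uniformly over all time changes within $\delta$ of $D_n$. Hence the dependence of $D$ on $\what N_n$ becomes irrelevant, and only $D_n\to D$ and $\Delta_n\to0$ are used. The same pathwise control is needed again for tightness, using $D\le\Gamma_1$, $\Gamma_1$ eventually $\calP_n$-measurable, and Doob's inequality for the large-time part. Your fourth-moment bounds would only cover the $c_n$-version.

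\textbf{Smaller points.}
\begin{itemize}
\item $S_\alpha$ is not ``$\lim_tN(t)/t\cdot$const'', since $N(t)/t\to1$. It is driven by the \emph{small} $\Gamma_j$ through $D=(\sum_j\Gamma_j^{-1/\alpha})^{-\alpha}$. Your conclusion $S_\alpha\in\bigvee_n\calP_n=\sigma(N)$ is nevertheless correct.
\item The $[0,\epsilon_nn^\alpha]$ part is not $\calP_n$-measurable either, because its integrand contains $c$. This is harmless given your uniform bound.
\item You never carry out the transfer to $\theta\ne0$ under conditioning on $\calP_n$; conditioning $W_n$ on $\calP$ does not address it. The paper does it with the $\calP_n$-measurable proxy $Q_n$ of the density and conditional dominated convergence.
\end{itemize}
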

Here, for almost sure weak convergence our reference is \citet{grubel16functional}. For random elements $(X_n)_{n\in\N}$ and $X$ in a complete and separable metric space $M$, we write $\calL(X_n\mid\calP_n)\aswto \calL(X\mid\calP)$ as $n\to\infty$, if $\limn \esp (f(X_n)\mid\calP_n) = \esp(f(X)\mid\calP)$ almost surely for all continuous and bounded functions $f:M\to\R$. Again,  in order to define the conditional expectations we assume implicitly that $(X_n)_{n\in\N},X$ are defined on a common probability space of which $(\calP_n)_{n\in\N},\calP$ are $\sigma$-algebras.  When $\calP_n\equiv \calP$, we simply write $X_n\aswto X$ with respect to $\calP$ as $n\to\infty$ as in \eqref{eq:DW16}. 

In particular, throughout we assume $S_\alpha$ is defined in \eqref{eq:diversity} and is $\calP$-measurable. We do not repeat this in the statements of quenched limit theorems in the sequel.

We have already established a quenched convergence of the process $W_n$ \eqref{eq:DW16} \citep{durieu16infinite}. It turns out that to prove Theorem \ref{thm:2} it suffices to establish the following quenched convergence for $Y_n$ and \eqref{eq:DW16}. 
\begin{proposition} \label{prop:CLT Y}
For $Y_n$ in \eqref{eq:Y} with  $\alpha\in(0,1),\theta>-\alpha$, under \eqref{eq:epsilon_n},
\[
\calL\pp{\pp{Y_n(t)}_{t\in[0,1]}\mmid\calP_n}\aswto\calL\pp{ S_{\alpha}^{1/2}\pp{Z_\alpha\topp2(t)}_{t\in[0,1]}\mmid\calP},
\]
as $n\to\infty$ in $D[0,1]$, where $Z_\alpha\topp2$ is  independent from $\calP$.
\end{proposition}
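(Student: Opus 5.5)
We plan to reduce $Y_n$ to a linear functional of the counting process $N$ in \eqref{eq:N(t)}, treat the case $\theta=0$ first, and pass to general $\theta$ by the change of measure \eqref{eq:Q}. Write $c:=(S_\alpha/\Gamma(1-\alpha))^{1/\alpha}$, so that $P_j^\downarrow=c\,\Gamma_j^{-1/\alpha}$ by Lemma \ref{lem:P_j}. Then
\[
\esp(K_m\mid\calP)=\sif j1\pp{1-(1-P_j^\downarrow)^m}.
\]

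\textbf{Step 1: reduction to a Poisson integral.} First replace $(1-p)^m$ by $e^{-mp}$. The error per term is at most of order $mp^2e^{-mp/2}$ for small $p$, and only finitely many $p$ are large. With $P_j^\downarrow\sim cj^{-1/\alpha}$ a.s., the total error is $O(m^{\alpha-1})+O(1)$ a.s., uniformly in $m\le n$. This is $o(n^{\alpha/2})$.

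Next use the elementary identity
\[
\int_0^\infty\pp{1-e^{-a s^{-1/\alpha}}}\d s=a^\alpha\Gamma(1-\alpha).
\]
With $a=mc$ it gives $m^\alpha S_\alpha$. Consequently, up to negligible terms,
\[
Y_n(t)=\int_0^\infty g_t(u)\,\d M_n(u),\qquad g_t(u):=1-\exp\pp{-tc\,u^{-1/\alpha}},\qquad M_n(u):=\frac{N(n^\alpha u)-n^\alpha u}{n^{\alpha/2}}.
\]
The candidate limit is $\int g_t\,\d\B$. Its covariance is
\[
\int_0^\infty g_sg_t\,\d u=c^\alpha\Gamma(1-\alpha)\pp{s^\alpha+t^\alpha-(s+t)^\alpha}=S_\alpha\cov\pp{Z_\alpha\topp2(s),Z_\alpha\topp2(t)},
\]
which is exactly the claimed limit.

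\textbf{Step 2: decoupling from $\calP_n$ (case $\theta=0$).} Here $N$ is a standard Poisson process. Split the integral at $u=\epsilon_n$.

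On $[0,\epsilon_n]$ we have $0\le g_t\le1$ and $g_t$ is monotone in $u$. Integrating by parts bounds this piece by $C\sup_{v\le\epsilon_nn^\alpha}|N(v)-v|/n^{\alpha/2}$. By the law of the iterated logarithm this is $O\big((\epsilon_n\log\log n)^{1/2}\big)\to0$ a.s., which is where \eqref{eq:epsilon_n} enters.

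On $(\epsilon_n,\infty)$ the increments of $N$ are independent of $\calP_n$, but $c$ is not $\calP_n$-measurable. Since $1/c=\sif k1\Gamma_k^{-1/\alpha}$, we replace $c$ by the $\calP_n$-measurable
\[
c_n:=\Big(\sum_{\Gamma_k\le\epsilon_nn^\alpha}\Gamma_k^{-1/\alpha}\Big)^{-1},
\]
which converges to $c$ a.s. Then, conditionally on $\calP_n$, the process $t\mapsto\int_{\epsilon_n}^\infty g_t^{(c_n)}\,\d M_n$ is a compensated Poisson integral with a frozen deterministic kernel. We apply a CLT for Poisson integrals (e.g.\ Lindeberg, or cumulants: the $k$-th cumulant is $O(n^{-\alpha(k-2)/2})$) to get finite-dimensional convergence to $S_\alpha^{1/2}Z\topp2_\alpha$ given $\calP$. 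For tightness in $D[0,1]$ we bound
\[
\esp\pp{|Y(t)-Y(s)|^2|Y(s)-Y(r)|^2\mid\calP_n}\le C(t^\alpha-r^\alpha)^{2}\quad\text{for } r\le s\le t,
\]
using $|g_t-g_s|\le 1-e^{-(t-s)cu^{-1/\alpha}}$ and explicit Poisson moments, and then invoke Billingsley's criterion. To obtain a.s.\ convergence of conditional laws, we work on a countable convergence-determining class of $f$ and use that the conditional laws depend continuously on $c_n\to c$.

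\textbf{Main obstacle.} The delicate point is the replacement of $c$ by $c_n$ in the kernel. Differentiating in $c$, the error equals $(c-c_n)/c$ times a centered Poisson integral of size $O(1)$, plus deterministic parts which cancel in the centering. Since $c-c_n\to0$ a.s., this error vanishes; the naive bound of relative error times $n^{\alpha/2}$ must be avoided. We must check carefully that the compensator part really cancels, or that the mean is taken with respect to the same $c$; this is where we expect the main work.

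\textbf{Step 3: general $\theta$.} Under $\mathbb Q_{\alpha,\theta}$ we have
\[
\esp_{\mathbb Q}(f(Y_n)\mid\calP_n)=\frac{\esp\pp{f(Y_n)S_\alpha^{\theta/\alpha}\mid\calP_n}}{\esp\pp{S_\alpha^{\theta/\alpha}\mid\calP_n}}.
\]
Since $S_\alpha=\Gamma(1-\alpha)c^\alpha$ and $c_n\to c$ a.s., $S_\alpha^{\theta/\alpha}$ is asymptotically $\calP_n$-measurable, with uniform integrability from the moments of $S_\alpha$. Hence the density factors out in the limit, and the $\theta=0$ result transfers. Finally, the almost sure statements in Step 1 carry over by Lemma \ref{lem:P_j}(ii).
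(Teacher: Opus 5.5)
Your plan is the paper's proof written in kernel form. Integrating $\int_0^\infty g_t\,\d M_n$ by parts gives exactly the paper's $\wt Y_n(t)=\int_0^\infty e^{-x}\wb N_n(D(t/x)^\alpha)\,\d x$ with $D=c^\alpha$, and your $c_n$ is the paper's $D_n^{1/\alpha}$. The LIL cutoff at $\epsilon_n$, the independence of the increments after $\epsilon_nn^\alpha$, the comparison of $(1-p)^m$ with $e^{-mp}$, and the change of measure through a $\calP_n$-measurable proxy of $S_\alpha^{\theta/\alpha}$ all appear in the paper in the same form. Where you differ is in handling the frozen-kernel process: you use cumulants and a fourth-moment criterion, while the paper uses the functional CLT for $\what N_n$, continuous mapping, and Doob's inequality near $x=0$. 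That is a legitimate alternative.

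However, the step you leave open is precisely the paper's Lemma~\ref{lem:decouple ND}, and your heuristic for it is not yet an argument. The compensator is not the issue. Since $(nt)^\alpha S_\alpha=n^\alpha\int_0^\infty g^{(c)}_t(u)\,\d u$ identically, replacing $c$ by $c_n$ in both the $\d N$ and the $\d u$ parts leaves the purely compensated error $\int_{\epsilon_n}^\infty(g^{(c)}_t-g^{(c_n)}_t)\,\d M_n$, with no deterministic remainder. The real issue is that this error is not ``$(c-c_n)/c$ times a centered Poisson integral of size $O(1)$''. The mean-value point depends on $u$. Moreover, $c$ is a function of the very increments on $(\epsilon_n,\infty)$ you integrate against and is not $\calP_n$-measurable. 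So the resulting kernel is random and non-adapted, its conditional law given $\calP_n$ is not that of a Poisson integral, and Poisson moment formulas do not apply to it.

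To close the gap, write $g^{(c)}_t-g^{(c_n)}_t=-\int_c^{c_n}\partial_{c'}g^{(c')}_t\,\d c'$, apply Fubini, and use $\partial_{c'}g^{(c')}_t(u)=\phi(tc'u^{-1/\alpha})/c'$ with $\phi(y)=ye^{-y}$. Since $c\le c_n$, the error is bounded, uniformly in $t\in[0,1]$, by $\frac{c_n-c}{c}\sup_{\lambda\in[0,c_n]}\bigl|\int_{\epsilon_n}^\infty\phi(\lambda u^{-1/\alpha})\,\d M_n(u)\bigr|$. The supremum runs over a $\calP_n$-measurable range of a process with deterministic kernels, driven by increments independent of $\calP_n$. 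It is therefore bounded in conditional probability by the same moment argument. The factor $(c_n-c)/c\to0$ a.s. is then handled by conditional dominated convergence. The paper avoids differentiating altogether: after integration by parts, $c$ enters only through the time change $D(t/x)^\alpha$ versus $D_n(t/x)^\alpha$. The difference is then bounded by $\Delta_n$ plus the modulus of continuity of $\what N_n$ on $[0,\Gamma_1]$.

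Two smaller corrections to your tightness step.
\begin{itemize}
\item The pointwise bound $|g_t-g_s|\le1-e^{-(t-s)cu^{-1/\alpha}}$ only gives $\int(g_t-g_s)^2\,\d u\le C(t-s)^\alpha$. That means exponent $2\alpha$, which fails Billingsley's criterion for $\alpha\le1/2$. Your stated bound does hold, but via $(g_t-g_s)^2\le g_t-g_s$ and $\int(g_t-g_s)\,\d u=c^\alpha\Gamma(1-\alpha)(t^\alpha-s^\alpha)$.
\item The fourth-cumulant term $n^{-\alpha}\int_{\epsilon_n}^\infty(g_t-g_s)^2(g_s-g_r)^2\,\d u$ is at most a constant (depending on $c_n$) times $(\epsilon_nn^\alpha)^{-1}(t^\alpha-r^\alpha)^2$, and only because of the cutoff. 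Over $(0,\infty)$, e.g.\ for $r=0$, $s=t/2$, it is of order $n^{-\alpha}t^\alpha$, and the bound fails for $t\ll1/n$. So this is a second place where $\epsilon_nn^\alpha\to\infty$ enters your route.
\end{itemize}
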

Once the above proposition is established, the proof of Theorem \ref{thm:2} is relatively simple, and is provided in Section \ref{sec:WY} at the end. A similar idea has already been applied in \citet{durieu16infinite}. 
Moreover, for each $\alpha\in(0,1)$ fixed, the above results for $\theta>-\alpha, \theta\ne 0$ follow relatively easily from the case $\theta=0$, as explained below. (It is key to have the {\em quenched} convergence of $Y_n$ with $\theta=0$; if only the {\em annealed} convergence is established then the argument below does not work.) 
\begin{proof}[Proof of Proposition \ref{prop:CLT Y} with $\theta>-\alpha,\theta\ne0$]
{\em Assume that Proposition \ref{prop:CLT Y} has been proved with $\theta=0$.} 
Recall that the law of $(\alpha,\theta)$-partitions can be derived from the law of $(\alpha,0)$-partitions via a change of measure as discussed around \eqref{eq:Q}. 
Note also that for all bounded random variables $X$,
\[
\esp_{\mathbb Q_{\alpha,\theta}}\pp{X\mid\calP_n} = \frac{\esp(Q_{\alpha,\theta}X\mid\calP_n)}{\esp(Q_{\alpha,\theta}\mid\calP_n)}.
\]
Then, to prove the claimed result it suffices to show that, always assuming below that $Y_n$ is based on $(\alpha,0)$-partitions and setting $Y:=S_\alpha^{1/2}(Z_\alpha\topp2(t))_{t\in[0,1]}$, 
\equh\label{eq:theta ne 0}
\limn \esp\pp{Q_{\alpha,\theta} f(Y_n)\mmid\calP_n} = \esp \pp{Q_{\alpha,\theta}f(Y)\mmid\calP}
\eque
where $f$ is a continuous and bounded function and 
\[ Q_{\alpha,\theta} := \frac{\Gamma(\theta+1)}{\Gamma(\theta/\alpha+1)}S_\alpha^{\theta/\alpha} = \frac{\Gamma(\theta+1)}{\Gamma(\theta/\alpha+1)}\pp{\Gamma(1-\alpha)\pp{\sif j1 \Gamma_j^{-1/\alpha}}^{-\alpha}}^{\theta/\alpha}.
\]
(Indeed, \eqref{eq:theta ne 0} implies that $\limn \esp(Q_{\alpha,\theta}\mid\calP_n) = Q_{\alpha,\theta}>0$ almost surely, and hence it suffices to examine the convergence of the corresponding numerators.)

Write $Q=Q_{\alpha,\theta}$ from now on, and set
\[
Q_n:=
\begin{cases}
\displaystyle
\frac{\Gamma(\theta+1)}{\Gamma(\theta/\alpha+1)}
\pp{\Gamma(1-\alpha)\pp{\sum_{j:\Gamma_j\leq\epsilon_n n^\alpha}\Gamma_j^{-1/\alpha}}^{-\alpha}}^{\theta/\alpha},
&\mbox{if }N(\epsilon_n n^\alpha)\geq1,\\
1,&\mbox{otherwise.}
\end{cases}
\]

Notice that $Q_n$ is $\calP_n$-measurable. Thus, to show \eqref{eq:theta ne 0} it suffices to remark
\begin{align*}
\esp(Qf(Y_n)\mid\calP_n) - \esp (Qf(Y)\mid\calP) 
& = 
\esp(Qf(Y_n)\mid\calP_n) - \esp (Q_nf(Y_n)\mid\calP_n)\\
& \quad +
Q_n\esp(f(Y_n)\mid\calP_n) - Q_n\esp (f(Y)\mid\calP)\\
& \quad +
Q_n\esp(f(Y)\mid\calP) - Q\esp (f(Y)\mid\calP),
\end{align*}
and each of the three differences on the right-hand side converges to zero almost surely.

Indeed, for the first difference, we have
\[
\abs{\esp\pp{(Q-Q_n)f(Y_n)\mmid\calP_n}}
\leq\nn f_\infty\esp\pp{|Q-Q_n|\mmid\calP_n}\to0
\]
almost surely by the conditional dominated convergence \citep[Theorem 5.5.9]{durrett10probability}.
 Indeed, $Q_n\to Q$ almost surely, and $|Q-Q_n|$ is bounded by an integrable random variable. For $\theta>0$, writing
\[
c_{\alpha,\theta}:=
\frac{\Gamma(\theta+1)}{\Gamma(\theta/\alpha+1)}
\Gamma(1-\alpha)^{\theta/\alpha},
\]
we have $|Q-Q_n|\leq Q+1+c_{\alpha,\theta}\Gamma_1^{\theta/\alpha}$; for $\theta<0$, we have $|Q-Q_n|\leq Q+1$. 
Then, the second difference converges to zero by Proposition \ref{prop:CLT Y} with $\theta=0$, and the third difference converges to zero since $Q_n\to Q$ almost surely.
\end{proof}
The majority of the rest of this section is devoted to the proof of Proposition \ref{prop:CLT Y} with $\theta = 0$.

From now on, we consider equivalently the total number of non-empty urns in a Karlin model with random frequencies $(P_j)_{j\in\N}$ (i.e.~the nonparametric Bayesian view). That is, given $\calP$, we sample conditionally i.i.d.~random variables $(X_j)_{j\in\N}$ with $\proba(X_j = \ell\mid\calP) = P_\ell^\downarrow, \ell\in\N$, and set
\[
K_{n,\ell} := \summ j1n \inddd{X_j = \ell}, \ell\in\N \qmand K_n:=\sif \ell1 \inddd{K_{n,\ell}>0}. 
\]
The $K_n$ defined this way has the same law as the number of blocks in the $(\alpha,\theta)$-partition of $\{1,\dots,n\}$.
For computational convenience, the idea of Poissonization is to consider the following approximation. Given $\calP$, let $(\Lambda_\ell(t))_{t\ge0}, \ell\in\N$ be conditionally independent Poisson processes with respective rates $P_\ell^\downarrow$. The approximation of $K_n$ is
\[
\wt K(t):=\sif \ell1 \inddd{\Lambda_\ell(t)>0}, t\ge 0.
\]
The advantage of working with $\wt K(t)$ is that given $\calP$ it is a summation of conditionally independent random variables. It is clear that limit theorems concerning $\wt K(t)$ are easier to prove thanks to independence than those concerning $K_n$, and this fact has been exploited substantially in the analysis of $W_n$ \citep{karlin67central,durieu16infinite}.
In the analysis of $Y_n$, however, we do not exploit this independence, but instead use a continuous mapping argument on functionals of a Poisson process. The obstacle here is that the involved Poisson process takes the form of $(N(Dt))_{t\ge 0}$ where the random variable $D$ depends on the Poisson process itself. Most of the effort is devoted to  decoupling the dependence between the two as $t\to\infty$ (Lemma \ref{lem:decouple ND} below). We then need to control the difference between $\wt K(n)$ and $K_n$. This step is known as the de-Poissonization: often a {\em functional} central limit theorem for $\wt K$ is established, and the tightness for the functional convergence might be quite challenging to prove.

\subsection{Proof for the Poissonized model}
Set $\nu(t) := \max\{j: P_j^\downarrow\ge 1/t\}$. 	
Then,
\[
\esp \pp{\wt K(n)\mmid\calP} = \int_0^\infty(1-e^{-n/x})\nu(\d x) = \int_0^\infty \frac n{x^2}e^{-n/x}\nu(x)\d x = \int_0^\infty e^{-x}\nu(n/x)\d x.
\]
Recall $\theta=0$ and the representation of $P_j^\downarrow$ in Lemma \ref{lem:P_j}. 
For notational convenience, introduce
\[
D := \frac{S_\alpha}{\Gamma(1-\alpha)} = \pp{\sif j1 \Gamma_j^{-1/\alpha}}^{-\alpha}.
\]
In particular, 
$P_j^\downarrow = D^{1/\alpha} \Gamma_j^{-1/\alpha}$,
and hence
\[
\nu(t) = \max\ccbb{j\in\N:P_j^\downarrow\ge \frac 1t} = N\pp{D t^\alpha}.
\]
	Notice also that 
	\[
	\int_0^\infty e^{-x}\pp{\frac tx}^\alpha \d x = t^\alpha\Gamma(1-\alpha).
	\]
	The statistic of interest is now
\[
	\wt Y_n(t):=\frac{\esp\spp{\wt K(nt)\mid\calP} - \Gamma(1-\alpha)D (nt)^\alpha}{n^{\alpha/2}} = \int_0^\infty e^{-x}\frac{N\pp{D(nt/x)^\alpha}- D(nt/x)^\alpha}{n^{\alpha/2}}\d x.
\]
Note that the integrand above is integrable at both $0$ and $\infty$ almost surely.

The difficulty of the analysis is that the Poisson process $N$ induced by $(\Gamma_n)_{n\in\N}$ and $D$ are dependent. 
We start with a heuristic calculation that identifies the limit process.
By the functional central limit theorem of a Poisson process, we know that
	\[
	\pp{\frac{N(nt)-nt}{\sqrt n}}_{t\in[0,\infty)}\weakto (\B_t)_{t\in[0,\infty)}
	\]
	in $D[0,\infty)$ where $\B$ is a standard Brownian motion. 
	Thus, {\em assuming} $D$ is independent from $N(t)$ (which is not true here so we are using a little abuse of notation), one {\em expects}
	\begin{align*}
		(\wt Y_n(t))_{t\in[0,1]}&\weakto \pp{\int_0^\infty e^{-x}\B_{D(t/x)^\alpha}\d x}_{t\in[0,1]} \eqd D^{1/2}\pp{\int_0^\infty e^{-x}\B_{(t/x)^\alpha}\d x}_{t\in[0,1]}\\
		&= \pp{\frac{S_\alpha}{\Gamma(1-\alpha)}}^{1/2}\pp{\int_0^\infty (1-e^{-t/x^{1/\alpha}})\d \B_x}_{t\in[0,1]}\eqd S_\alpha^{1/2}\pp{Z
		\topp2_\alpha(t)}_{t\in[0,1]},
	\end{align*}
	where $Z_\alpha\topp2$ is a centered Gaussian process with \[
	\cov(Z\topp2_\alpha(s),Z\topp2_\alpha(t)) = s^\alpha+t^\alpha-(s+t)^\alpha, s,t>0,
	\] 
	as introduced earlier.
The first equality in distribution follows from self-similarity of Brownian motion and independence between $D$ and $\B$. The equality step follows from stochastic integration by parts. The last step follows from It\^o's isometry: we have
\begin{align*}
\int_0^\infty(1-e^{-s/x^{1/\alpha}})(1-e^{-t/x^{1/\alpha}})\d x & = \int_0^\infty (1-e^{-sy})(1-e^{-ty})\alpha y^{-1-\alpha}\d y\\
& = \int_0^\infty\pp{1-e^{-sy}-e^{-ty}+e^{-(s+t)y}}\alpha y^{-\alpha-1}\d y \\
&= \Gamma(1-\alpha)\pp{s^\alpha+t^\alpha-(s+t)^\alpha}.
\end{align*}

To make the above argument rigorous, the key step is the following.
\begin{proposition} \label{prop:CLT Poissonized}
With $(\epsilon_n)_{n\in\N}$ satisfying \eqref{eq:epsilon_n}, $\alpha\in(0,1)$, and $\theta=0$,
		\[
\calL\pp{\pp{\wt Y_n(t)}_{t\in[0,1]}\mmid\calP_n}\aswto\calL\pp{ \pp{S_{\alpha}^{1/2}Z_\alpha\topp2(t)}_{t\in[0,1]}\mmid\calP}
		\]
		as $n\to\infty$ in $D[0,1]$, where $Z_\alpha\topp2$ is independent from $\calP$.
	\end{proposition}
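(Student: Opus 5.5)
The plan is to decouple the Poisson process $N$ from the random scale $D$ by splitting $N$ at the time $\epsilon_n n^\alpha$. The information before this time is $\calP_n$-measurable. The increments after it form a Poisson process independent of $\calP_n$, since $\theta=0$ and Lemma \ref{lem:P_j}(i) applies. Set
\[
D_n:=\pp{\sum_{j:\Gamma_j\le \epsilon_n n^\alpha}\Gamma_j^{-1/\alpha}}^{-\alpha},
\]
which is $\calP_n$-measurable, and set
\[
\wt N_n(s):=N(\epsilon_n n^\alpha+s)-N(\epsilon_n n^\alpha), \quad s\ge0.
\]
This is a standard Poisson process independent of $\calP_n$. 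Then I would rewrite $\wt Y_n(t)$, up to negligible errors, as a functional $\Phi_n(D_n,\wt N_n)$ in which only $\wt N_n$ is random given $\calP_n$.

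\emph{Step 1 (early times are negligible).} On the range of $x$ where $D(nt/x)^\alpha\le \epsilon_n n^\alpha$, I bound the integrand by $\sup_{s\le \epsilon_n n^\alpha}|N(s)-s|/n^{\alpha/2}$. By the law of the iterated logarithm this is $O\bigl((\epsilon_n n^\alpha\log\log n)^{1/2}/n^{\alpha/2}\bigr)=O\bigl((\epsilon_n\log\log n)^{1/2}\bigr)\to0$ almost surely, uniformly in $t\in[0,1]$. This is exactly where the condition $\epsilon_n\log\log n\to0$ in \eqref{eq:epsilon_n} is used. For larger times I write
\[
N(u)-u=\bigl(N(\epsilon_nn^\alpha)-\epsilon_nn^\alpha\bigr)+\bigl(\wt N_n(u-\epsilon_nn^\alpha)-(u-\epsilon_nn^\alpha)\bigr).
\]
The first bracket is negligible by the same LIL bound.

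\emph{Step 2 (replacing $D$ by $D_n$).} Since $\Gamma_j\sim j$, the omitted tail satisfies $\sum_{\Gamma_j>\epsilon_nn^\alpha}\Gamma_j^{-1/\alpha}\asymp(\epsilon_nn^\alpha)^{1-1/\alpha}$. Hence $|D/D_n-1|=O\bigl((\epsilon_nn^\alpha)^{1-1/\alpha}\bigr)$ almost surely. In the centered quantity $N(Du)-Du-(N(D_nu)-D_nu)$ the drifts cancel. What remains is the fluctuation of a Poisson process over an interval of length $L\approx |D-D_n|\,u$. For $u$ of order $n^\alpha$ I would use an almost sure modulus-of-continuity bound (Csörgő--Révész type increments of the Poisson process). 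This gives a fluctuation of order $\sqrt{L\log n}$, and therefore
\[
\frac{\sqrt{L\log n}}{n^{\alpha/2}}\approx\bigl((\epsilon_nn^\alpha)^{1-1/\alpha}\log n\bigr)^{1/2}.
\]
I expect this step to be the main obstacle. The rate at which $\epsilon_nn^\alpha\uparrow\infty$ is not prescribed, so the crude $\log n$ factor may have to be replaced by a finer argument. One option is a conditional second-moment bound given $\calP_n$, using independence of $\wt N_n$ from $D_n$ and the smallness of $D-D_n$. Another is to apply the modulus bound to $\wt N_n$ only after truncating $x$.

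\emph{Step 3 (truncation in $x$).} Restrict the integral to $x\in[\delta,M]$, where the time argument $D_n(nt/x)^\alpha$ is of order $n^\alpha$. For $x>M$ the times fall below $\epsilon_nn^\alpha$, or the weight $e^{-x}$ together with the bounds above makes the contribution small. For $x<\delta$ I use the conditional variance bound $\esp\bigl[(\wt N_n(u)-u)^2\mid\calP_n\bigr]=u$. This makes the integrand of order $(t/x)^{\alpha/2}$, which is integrable at $0$ because $\alpha/2<1$. A conditional Chebyshev argument together with a monotonicity or chaining bound in $t$ (both $N$ and $u\mapsto u$ are nondecreasing) makes this uniform in $t$. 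It gives $\limsup_n\esp(\sup_t|\cdot|\wedge1\mid\calP_n)\le C\delta^{1-\alpha/2}$ almost surely.

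\emph{Step 4 (conclusion).} For a bounded continuous $f$, the $\calP_n$-conditional expectation of $f$ of the main term equals $g_n(D_n)$, where $g_n(d):=\esp f\bigl(\Phi_n(d,\wt N)\bigr)$ for a fixed standard Poisson process $\wt N$. By the Poisson FCLT, Brownian scaling and the continuous mapping theorem (the map is continuous on $[\delta,M]$), $g_n\to g$ locally uniformly in $d>0$. Here
\[
g(d)=\esp f\pp{\pp{d^{1/2}\int_\delta^M e^{-x}\B_{(t/x)^\alpha}\d x}_{t\in[0,1]}}.
\]
Since $D_n\to D$ almost surely, we get $g_n(D_n)\to g(D)$ almost surely. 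The negligible terms from Steps 1--3 vanish in the conditional sense because $f$ is uniformly continuous on the relevant sets; when $f$ is merely bounded continuous, a Portmanteau argument through bounded Lipschitz $f$ handles this. Letting $\delta\downarrow0$ and $M\uparrow\infty$, and using the It\^o isometry computation above, identifies the limit as $\calL\bigl(S_\alpha^{1/2}Z_\alpha\topp2\mid\calP\bigr)$.
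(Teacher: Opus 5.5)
Your Step~2 is a genuine gap, and it is the core difficulty of the proposition. The almost sure route fails under \eqref{eq:epsilon_n}, which allows $\epsilon_nn^\alpha$ to grow arbitrarily slowly (e.g.\ $\epsilon_nn^\alpha=\log\log\log n$). In that case your bound $\bigl((\epsilon_nn^\alpha)^{1-1/\alpha}\log n\bigr)^{1/2}$ diverges. Any almost sure modulus estimate for Poisson increments, even the sharp Csörg\H{o}--R\'ev\'esz one, carries at least a $\log\log n$ factor. So this route needs $(\epsilon_nn^\alpha)^{1/\alpha-1}\gg\log\log n$, which is not assumed.

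Your fallback, a conditional second-moment bound for $N(Du)-N(D_nu)$ given $\calP_n$, does not work as stated either. $D$ is not $\calP_n$-measurable, and $D^{-1/\alpha}-D_n^{-1/\alpha}=\sum_{\Gamma_j>\epsilon_nn^\alpha}\Gamma_j^{-1/\alpha}$ is a functional of the very process $\wt N_n$ you want to treat as independent noise. Hence $N(Du)-N(D_nu)$ is not a Poisson increment over a $\calP_n$-measurable window. The same dependence affects Step~3: before using $\esp[(\wt N_n(u)-u)^2\mid\calP_n]=u$ with Doob's inequality, you must dominate the random horizon via $D\le D_n$ (or $D\le\Gamma_1$, as in the paper).

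The missing idea is that no rate is needed, because only conditional probabilities have to vanish, not the error itself. Work in the rescaled time of $\wb N_n(u)=(N(n^\alpha u)-n^\alpha u)/n^{\alpha/2}$. Let $\what N_n(u)=\wb N_n(\epsilon_n+u)-\wb N_n(\epsilon_n)$ (your $\wt N_n$, rescaled) and $\Delta_n=\sup_{u\le\epsilon_n}|\wb N_n(u)|$. Fix $\kappa>0$ and restrict to $A_{n,\kappa}=\{N(\epsilon_nn^\alpha)\ge1,\ D_n-D<\kappa\}$. Since $\calP_n\uparrow$ and $D_n\to D$ a.s., conditional dominated convergence gives $\proba(A_{n,\kappa}^c\mid\calP_n)\to0$ a.s. On $A_{n,\kappa}$ one has $0\le D_n-D<\kappa$ and $D_n\le\Gamma_1$, hence
\[
\sup_{t\le1}\bigl|\wb N_n(Dt^\alpha)-\wb N_n(D_nt^\alpha)\bigr|\le2\Delta_n+2\what N_n^*(\kappa)+\omega_{\what N_n}(\Gamma_1,\kappa),
\]
where $\what N_n^*$ and $\omega_{\what N_n}$ are the running supremum and modulus of continuity of $\what N_n$. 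Here $\Delta_n\to0$ a.s.\ by the LIL. The last two terms are functionals of a process independent of $\calP_n$, evaluated at $\calP_n$-measurable arguments. Their conditional tail probabilities therefore reduce to unconditional ones, and $\lim_{\kappa\downarrow0}\limsup_n$ of these is zero by tightness in the Poisson FCLT. The same works on $[0,K]$ for any fixed $K$.

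This is the paper's Lemma~\ref{lem:decouple ND}. With it in place of your Step~2, the rest of your plan goes through: the LIL for the early part, the $\delta^{1-\alpha/2}$ bound near $x=0$, and continuous mapping pointwise in $\omega$ with $D_n\to D$. Applying continuous mapping directly in $D[0,1]$ is a legitimate substitute for the paper's fdd-plus-tightness argument.
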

	
The first step is to prove the following. 
\begin{lemma}\label{lem:decouple ND}With $(\epsilon_n)_{n\in\N}$ satisfying \eqref{eq:epsilon_n}, $\alpha\in(0,1)$, and $\theta=0$,
\[
\calL\pp{\pp{\frac{N(D(nt)^\alpha) - D(nt)^\alpha}{n^{\alpha/2}}}_{t\in[0,\infty)}\mmid\calP_n}\aswto\calL\pp{ D^{1/2} (\B_{t^{\alpha}})_{t\in[0,\infty)}\mmid\calP},\]
in $D[0,\infty)$,
		where on the right-hand side $D = \Gamma(1-\alpha)\inv S_\alpha$ and the Brownian motion~$\B$ is independent from $\calP$.
	\end{lemma}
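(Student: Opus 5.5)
The plan is to approximate $D$ by a $\calP_n$-measurable quantity and to use the independent-increments structure of the standard Poisson process $N$ (Lemma \ref{lem:P_j}(i), $\theta=0$) beyond time $\epsilon_nn^\alpha$. Set $T_n:=\epsilon_nn^\alpha\uparrow\infty$ and
\[
D_n:=\pp{\sum_{j:\Gamma_j\le T_n}\Gamma_j^{-1/\alpha}}^{-\alpha}
\]
(with $D_n:=1$ if $N(T_n)=0$). Then $D_n$ is $\calP_n$-measurable and $D_n\to D$ almost surely. Write $N(s)=N(T_n)+N_n'(s-T_n)$ for $s\ge T_n$, where $N'_n$ is a standard Poisson process independent of $\calP_n$. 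I would compare three processes:
\[
U_n(t):=\frac{N(D(nt)^\alpha)-D(nt)^\alpha}{n^{\alpha/2}},\qquad
U_n'(t):=\frac{N(D_n(nt)^\alpha)-D_n(nt)^\alpha}{n^{\alpha/2}},
\]
\[
U_n''(t):=\frac{N_n'\big((D_n(nt)^\alpha-T_n)_+\big)-(D_n(nt)^\alpha-T_n)_+}{n^{\alpha/2}}.
\]

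\emph{Step 1 (reduction principle).} It suffices to check $\limn\esp(f(U_n)\mid\calP_n)=\esp(f(D^{1/2}\B_{\cdot^\alpha})\mid\calP)$ for bounded Lipschitz $f$ with respect to a metric for $D[0,\infty)$; a countable convergence-determining family of such $f$ handles the almost sure quantifier. If $g_n:=\sup_{t\le M}|U_n(t)-U''_n(t)|\to0$ almost surely for every $M$, I would argue via Hunt's lemma. Since $\calP_n$ is increasing (because $T_n\uparrow$), the bounded quantities $h_k:=\sup_{m\ge k}\min(g_m,1)$ satisfy, for $n\ge k$,
\[
\esp(\min(g_n,1)\mid\calP_n)\le\esp(h_k\mid\calP_n)\to\esp(h_k\mid\calP_\infty)\quad (n\to\infty).
\]
Letting $k\to\infty$ then shows that the error in replacing $U_n$ by $U_n''$ vanishes almost surely. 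Here $\calP_\infty=\sigma(N)=\sigma((P_j^\downarrow)_{j\in\N})\subset\calP$, and the target involves only $D$, which is $\calP_\infty$-measurable.

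\emph{Step 2 (pathwise approximations via strong approximation).} I would use the Koml\'os--Major--Tusn\'ady coupling $N(s)-s=\B'(s)+O(\log s)$ almost surely, together with the law of the iterated logarithm and L\'evy's modulus of continuity for $\B'$.

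First compare $U_n$ and $U_n'$. Their difference is an increment of the centered process $N(s)-s$ over an interval of length $|D-D_n|(nt)^\alpha=o(n^\alpha)$ located at height $O(n^\alpha)$. By the modulus of continuity this is $O\big(\sqrt{|D-D_n|n^\alpha\log n}\big)+O(\log n)$. Dividing by $n^{\alpha/2}$ gives a bound of order $\sqrt{|D-D_n|\log n}$. This tends to $0$ provided $|D-D_n|=o(1/\log n)$. I expect this to hold because the tail sum $\sum_{\Gamma_j>T_n}\Gamma_j^{-1/\alpha}\approx cT_n^{1-1/\alpha}$ decays polynomially in $n$ (here one uses $\epsilon_n\log\log n\to0$ together with the growth of $T_n$). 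If necessary, the modulus can be refined with a $\log\log$ version, since only intervals at fixed locations in a bounded time window are involved.

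Next compare $U_n'$ and $U_n''$. They differ by $(N(T_n)-T_n)/n^{\alpha/2}$, plus, for the small $t$ with $D_n(nt)^\alpha<T_n$, by $\sup_{s\le T_n}|N(s)-s|/n^{\alpha/2}$. By the LIL both are $O\big(\sqrt{T_n\log\log T_n}/n^{\alpha/2}\big)=O\big(\sqrt{\epsilon_n\log\log n}\big)\to0$. This is exactly where condition \eqref{eq:epsilon_n} enters.

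\emph{Step 3 (conditional FCLT).} Given $\calP_n$, the process $U''_n$ is a deterministic time change, by $s\mapsto(D_ns^\alpha-T_n/n^\alpha)_+$, of the rescaled independent Poisson process $\pp{(N'_n(n^\alpha u)-n^\alpha u)/n^{\alpha/2}}_{u\ge0}$. Realizing $N'_n$ through its own KMT Brownian motion $\B_n$ and using Brownian scaling, the conditional law of $U''_n$ equals the law of $\B_{(c\,t^\alpha-\delta)_+}$ up to a uniformly small error, evaluated at $c=D_n$ and $\delta=\epsilon_n\to0$. By continuity in $(c,\delta)$, uniform on compacts, together with $D_n\to D>0$, this yields $\esp(f(U''_n)\mid\calP_n)\to\esp f\big(D^{1/2}\B_{\cdot^\alpha}\big)$ with $D$ frozen, which is the claim.

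The main obstacle is Step 2's first comparison together with the Hunt-type argument of Step 1. The difficulty is that $D$ depends on the whole path of $N$, including the part beyond $T_n$ that drives the fluctuations. The argument therefore needs pathwise bounds that are insensitive to this dependence, and the rate $|D-D_n|\to0$ must be checked carefully against the modulus of continuity.
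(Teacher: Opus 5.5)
\textbf{Verdict: genuine gap.} Your Steps 1 and 3, and the second comparison in Step 2, are fine. In that comparison $\sqrt{\epsilon_n\log\log n}\to0$ enters through the LIL, just as in the paper's $\calP_n$-measurable quantity $\Delta_n=\sup_{u\le\epsilon_n}\sabs{\wb N_n(u)}$. The gap is the first comparison in Step 2, and a sharper modulus will not fix it.

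\emph{Why the rate fails.} Your pathwise bound on $\sup_{t\le M}\sabs{U_n(t)-U_n'(t)}$ needs $|D-D_n|\log n\to0$, and even a refined modulus needs $(D_n-D)\log\log n\to0$ almost surely. However,
\[
D^{-1/\alpha}-D_n^{-1/\alpha}=\sum_{\Gamma_j>T_n}\Gamma_j^{-1/\alpha}\sim\frac{\alpha}{1-\alpha}T_n^{1-1/\alpha},
\]
so $D_n-D\asymp T_n^{-(1-\alpha)/\alpha}$. Condition \eqref{eq:epsilon_n} imposes no growth rate on $T_n=\epsilon_nn^\alpha$ beyond $T_n\uparrow\infty$. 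The condition $\epsilon_n\log\log n\to0$ is an \emph{upper} bound on $T_n$, not a lower one. For example, $T_n=\log\log\log n$ is admissible, and then $(D_n-D)\log\log n\to\infty$. Your ``polynomial decay in $n$'' holds only under an extra assumption such as $T_n\ge n^c$.

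\emph{Why a.s.\ convergence itself should fail.} In such a regime one should not even expect $g_n\to0$ almost surely. $D_n$ stays frozen over enormously long stretches of $n$. Along such a stretch, the normalized increments over the windows $[Dn^\alpha,D_nn^\alpha]$ behave like nearly independent centered Gaussians with variance of order $D_n-D$, and their running maximum is not small. So the Hunt-lemma reduction of Step 1, which requires $g_n\to0$ almost surely, is not available for general $(\epsilon_n)$.

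\emph{The missing idea.} Control this remainder in conditional probability rather than pathwise, using the independence you already isolated in $N_n'$.
\begin{itemize}
\item Fix $\delta>0$ and work on $A_{n,\delta}=\{N(T_n)\ge1,\ D_n-D<\delta,\ \epsilon_n<\min(\delta,\Gamma_1)\}$.
\item Since $\inddd{A_{n,\delta}^c}\to0$ almost surely and $\calP_n$ increases, conditional dominated convergence gives $\proba(A_{n,\delta}^c\mid\calP_n)\to0$ almost surely. This uses only $D_n\to D$ almost surely, with no rate.
\item On $A_{n,\delta}$, using $D<D_n\le\Gamma_1$,
\[
\sup_{t\in[0,1]}\sabs{U_n(t)-U_n'(t)}\le 2\Delta_n+2\what N_n^*(\delta)+\omega_{\what N_n}(\Gamma_1,\delta).
\]
Here $\what N_n(u)=\wb N_n(\epsilon_n+u)-\wb N_n(\epsilon_n)$ is the normalized post-$T_n$ process (your $N_n'$), independent of $\calP_n$. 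Also $\what N_n^*(\delta)=\sup_{u\le\delta}\sabs{\what N_n(u)}$, and $\omega_{\what N_n}(\Gamma_1,\delta)$ is its modulus of continuity on $[0,\Gamma_1]$ with window $\delta$.
\item $\Gamma_1$ is $\calP_n$-measurable for all large $n$. So the conditional probabilities that the last two terms exceed $\eta$ are unconditional probabilities evaluated at $\Gamma_1$.
\item These vanish as $n\to\infty$ and then $\delta\downarrow0$, by tightness in the functional CLT for the Poisson process.
\end{itemize}
This is the paper's argument. It requires the remainder to be small only with high conditional probability, not almost surely, and that is why it works for every $(\epsilon_n)$ satisfying \eqref{eq:epsilon_n}.
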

	\begin{proof}It suffices to prove the convergence in the functional space $D[0,K]$ for any fixed $K$  \citep{billingsley99convergence}, and for the sake of simplicity we consider $K=1$.

Introduce
\equh\label{eq:D_n}
D_{n} := 
\begin{cases}
\displaystyle
\pp{\sum_{j:\Gamma_j\le \epsilon_n n^\alpha}\Gamma_j^{-1/\alpha}}^{-\alpha}, & \mbox{ if } N(\epsilon_n n^\alpha)\ge 1, \\
1, &\mbox{ otherwise.}
\end{cases}\eque
So $D_n\to D$ almost surely (here we need $\epsilon_nn^\alpha\to\infty$). The value 1 in the second case is irrelevant and we just need $D_n$ to be a well-defined finite number. Note also that $D_n$ is $\calP_n$-measurable.	To simplify the notation, write
\[
\wb N_n(t) := \frac{N(n^\alpha t)-n^\alpha t}{n^{\alpha/2}}.
\]
	So
\[
	\wt Y_n(t) = \int_0^\infty e^{-x}\wb N_n(D(t/x)^\alpha)\d x.
\]

We start by writing
		\begin{align}
			\wb N_n(Dt^\alpha) &= \wb N_n(D_nt^\alpha+\epsilon_n)- \wb N_n(\epsilon_n)\nonumber\\
			& \quad + \wb N_n(Dt^\alpha) - \wb N_n(D_n t^\alpha) + \wb N_n(D_nt^\alpha) - \wb N_n(D_nt^\alpha+\epsilon_n) + \wb N_n(\epsilon_n)\nonumber\\
			& =: \wb N_n(D_nt^\alpha+\epsilon_n) - \wb N_n(\epsilon_n) + R_n(t^\alpha).\label{eq:decomp}
		\end{align}
Set
\[
\pp{\what N_n(t)}_{t\ge 0} := \pp{\wb N_n(\epsilon_n+t) - \wb N_n(\epsilon_n)}_{t\ge 0}.
\]
By the independent-increment property, $\what N_n$ is a normalized standard Poisson process independent from $\calP_n$, and hence from $D_n$. Thus, the decomposition \eqref{eq:decomp} becomes
\[
\wb N_n(Dt^\alpha) = \what N_n(D_nt^\alpha) + R_n(t^\alpha), t\ge 0.
\]
Since $D_n$ is $\calP_n$-measurable and $D_n\to D$ almost surely, the functional central limit theorem and Slutsky's lemma applied pointwise in $\omega$ yield
		\[
		\calL\pp{\pp{\what N_n(D_n t^\alpha)}_{t\in[0,1]} \mmid \calP_n}\aswto \calL(D^{1/2}(\B_{t^{\alpha}})_{t\in[0,1]}\mid\calP),
		\]
as $n\to\infty$. 
Therefore, to conclude the proof it remains to prove the following:
\equh\label{eq:uniform control}
\limsupn \proba\pp{\sup_{t\in[0,1]}|R_n(t^\alpha)|>\eta\mmid\calP_n} = 0, \mfa \eta>0.
\eque

Set
\begin{align*}
\Delta_n& :=\sup_{u\in[0,\epsilon_n]}\sabs{\wb N_n(u)},\\
 \what N_n^*(t) &:= \sup_{u\in[0,t]}\sabs{\what N_n(u)},\\
  \omega_{\what N_n}(t,\delta) &:= \sup_{u,v\in[0,t],|u-v|\le \delta}\sabs{\what N_n(u)-\what N_n(v)}.
\end{align*}
Note that $\Delta_n$ is $\calP_n$ measurable, and both $\what N_n^*(t)$ and $\omega_{\what N_n}(t,\delta)$ are independent from $\calP_n$. 
Recall the law of the iterated logarithm for a standard Poisson process $\limsupn |N(n)-n|/\sqrt{2n\log\log n} = 1$. 
Notice that $\Delta_n = O(\sqrt{\epsilon_n \log\log (\epsilon_n n^\alpha)})$ almost surely. It then follows that $\Delta_n\to 0$ almost surely under \eqref{eq:epsilon_n}.  We also have $\what N_n^*(t)\weakto \sup_{s\in[0,t]}|\B_s|$ by the continuous mapping theorem. Note that $\proba(\omega_{\what N_n}(t,\delta)>\eta\mid\calP_n) = \proba(\omega_{\wb N_n}(t,\delta)>\eta)$ (with $\omega_{\wb N_n}$ defined similarly as the modulus of continuity of the process $\wb N_n$) and 
\equh\label{eq:Donsker Poisson}
\lim_{\delta\downarrow 0}\limsupn\proba\pp{\omega_{\wb N_n}(t,\delta)>\eta} = 0, \mfa \eta>0.
\eque
By the functional central limit theorem for the Poisson process, $(\wb N_n(s))_{s\in[0,t]}$ converges weakly to $(\B_s)_{s\in[0,t]}$. Since the limiting process has continuous sample paths, \eqref{eq:Donsker Poisson} follows.

For $\delta>0$, set
\[
A_{n,\delta}:=
\left\{
N\pp{\epsilon_n n^\alpha}\geq1,
D_n-D<\delta,
\epsilon_n<\min\{\delta,\Gamma_1\}
\right\}.
\]
Note that on the event $A_{n,\delta}$, $D_n\le \Gamma_1$. 
Since $D_n\to D<\Gamma_1$ almost surely and $\epsilon_n\to0$, we have
\[
\inddd{A_{n,\delta}^c}\to0
\qquad\text{almost surely,}
\]
for every fixed $\delta>0$. The estimates below show that
\begin{equation}\label{eq:Rn bound}
\sup_{t\in[0,1]}|R_n(t^\alpha)|
\leq5\left(\Delta_n+\omega_{\what N_n}(\Gamma_1,\delta)
+\what N_n^*(\delta)\right) \quad \mbox{ on
$A_{n,\delta}$}.
\end{equation}
Consequently,
\begin{multline}\label{eq:4 terms}
\proba\left(\sup_{t\in[0,1]}|R_n(t^\alpha)|>\eta\middle|\calP_n\right)\leq
\proba(A_{n,\delta}^c\mid\calP_n)
+\proba\left(\Delta_n>\frac{\eta}{15}\middle|\calP_n\right)\\
+\proba\left(\omega_{\what N_n}(\Gamma_1,\delta)>
\frac{\eta}{15}\middle|\calP_n\right)
+\proba\left(\what N_n^*(\delta)>
\frac{\eta}{15}\middle|\calP_n\right).
\end{multline}
Here and in the sequel, we shall need the conditional dominated convergence \citep[Theorem 5.5.9]{durrett10probability} a few times: if $X_n\to X$ almost surely, $|X_n|\le Z$ for all $n$ with $\esp Z<\infty$, and $\calF_n\uparrow\calF_\infty$, then $\esp(X_n\mid\calF_n)\to\esp(X\mid\calF_\infty)$ almost surely. 

The first term on the right-hand side of \eqref{eq:4 terms} goes to zero almost surely by the conditional dominated convergence
for every fixed $\delta>0$. 
The second term equals
$\inddd{\Delta_n>\eta/15}$ and hence also converges to zero almost surely.
Moreover, almost surely, $\Gamma_1$ is $\calP_n$-measurable for all
sufficiently large $n$. Therefore, by the independence of $\what N_n$ from
$\calP_n$ and the functional central limit theorem,
\begin{align*}
\lim_{\delta\downarrow0}\limsup_{n\to\infty}
\pp{\proba\left(\omega_{\what N_n}(\Gamma_1,\delta)>
\frac{\eta}{15}\middle|\calP_n\right)+\proba\left(\what N_n^*(\delta)>
\frac{\eta}{15}\middle|\calP_n\right)}=0
\qquad\text{almost surely}.
\end{align*}
This proves \eqref{eq:uniform control}.

It remains to prove \eqref{eq:Rn bound}. On the event $A_{n,\delta}$,
$D_n\le\Gamma_1$, $D_n-D<\delta$, and
$\epsilon_n<\min\{\delta,\Gamma_1\}$.
\begin{align*}
\sup_{t\in[0,1]}\sabs{\wb N_n(Dt^\alpha) - \wb N_n (D_nt^\alpha)} &\le \sup_{u,v\in[0,\Gamma_1], |u-v|\le D_n-D}\sabs{\wb N_n(u) - \wb N_n(v)}\\
& \le \sup_{u,v\in[0,\epsilon_n+\delta]}|\wb N_n(u)-\wb N_n(v)| \vee \sup_{\substack{u,v\in[\epsilon_n,\Gamma_1]\\|u-v|\le\delta}}\sabs{\wb N_n(u)-\wb N_n(v)}\\
&\le 2\sup_{u\in[0,\epsilon_n+\delta]}\sabs{\wb N_n(u)} \vee \sup_{u,v\in[\epsilon_n,\Gamma_1], |u-v|\le\delta}\sabs{\wb N_n(u)-\wb N_n(v)}.
\end{align*}
Notice that
\[
 \sup_{u,v\in[\epsilon_n,\Gamma_1], |u-v|\le\delta}\sabs{\wb N_n(u)-\wb N_n(v)}= \sup_{u,v\in[0,\Gamma_1-\epsilon_n],|u-v|\le\delta}\sabs{\what N_n(u)-\what N_n(v)} \le \omega_{\what N_n}(\Gamma_1,\delta),
 \]
 and
\equh
 \sup_{u\in[0,\epsilon_n+\delta]}\sabs{\wb N_n(u)}\le \sup_{u\in[0,\epsilon_n]}\sabs{\wb N_n(u)}\vee \sup_{u\in[0,\delta]}\pp{\sabs{\what N_n(u)}+\sabs{\wb N_n(\epsilon_n)}} \le \Delta_n+\what N_n^*(\delta). \label{eq:sup bound}
\eque
 Therefore, 
\equh\label{eq:A}
\sup_{t\in[0,1]}\sabs{\wb N_n(Dt^\alpha) - \wb N_n (D_nt^\alpha)} \le 2\Delta_n+2\what N_n^*(\delta)+\omega_{\what N_n}(\Gamma_1,\delta).
\eque
Next,
\begin{align}
\sup_{t\in[0,1]}&\sabs{\wb N_n(D_n t^\alpha) - \wb N_n(D_nt^\alpha+\epsilon_n)} \le
\sup_{\substack{u,v\in[0,\Gamma_1+\epsilon_n]\\ |u-v|=\epsilon_n}}\sabs{\wb N_n(u)-\wb N_n(v)} \nonumber
\\
& \le \sup_{\substack{u,v\in[0,2\epsilon_n]\\|u-v|=\epsilon_n}}\sabs{\wb N_n(u)-\wb N_n(v)} \vee \sup_{\substack{u,v\in[\epsilon_n,\Gamma_1+\epsilon_n]\\|u-v|=\epsilon_n}}\sabs{\wb N_n(u)-\wb N_n(v)}\nonumber\\
& \le 2\sup_{u\in[0,2\epsilon_n]}\sabs{\wb N_n(u)} + \omega_{\what N_n}(\Gamma_1,\epsilon_n) \le 2\Delta_n + 2\what N_n^*(\epsilon_n)+\omega_{\what N_n}(\Gamma_1,\epsilon_n).\label{eq:B}
\end{align}
We have used \eqref{eq:sup bound} in the last inequality again. Combining \eqref{eq:sup bound}, \eqref{eq:A}, \eqref{eq:B}, and the definition of $R_n(t^\alpha)$ in \eqref{eq:decomp}, we have thus proved \eqref{eq:Rn bound}.
\end{proof}


	\begin{proof}[Proof of Proposition \ref{prop:CLT Poissonized}]
We first prove the convergence of finite-dimensional distributions $(\wt Y_n(t))_{t\in [0,1]}$; that is, for all $k\in\N, t_1,\dots,t_k\in[0,1]$,
\equh\label{eq:fdd}
\calL\pp{\pp{\wt Y_n(t_j)}_{j=1,\dots,k}\mmid\calP_n}\aswto \calL\pp{\pp{D^{1/2}\int_0^\infty e^{-x}\B_{(t_j/x)^\alpha}\d x}_{j=1,\dots,k}\mmid\calP},
\eque
as $n\to\infty$.
We continue to use the notations introduced in Lemma \ref{lem:decouple ND}.
For each $\epsilon\in(0,1)$ (the choice is independent from $(\epsilon_n)_{n\in\N}$), we  decompose $\wt Y_n(t)$ into
\equh
			\wt Y_n(t) = \wt Y_{n,\epsilon}\topp1(t) + \wt Y\topp2_{n,\epsilon}(t),
			 \label{eq:decom In} 
\eque
with
       \begin{align*}
            \wt Y\topp1_{n,\epsilon}(t):= \int_0^{\epsilon} e^{-x} \wb N_n(D(t/x)^\alpha) \, \d x, \qmand
            \wt Y\topp2_{n,\epsilon}(t):= \int_{\epsilon}^\infty e^{-x} \wb N_n(D(t/x)^\alpha) \, \d x. 
        \end{align*} 
By Lemma \ref{lem:decouple ND}, it follows that
\[
\calL\pp{\pp{\wt Y\topp2_{n,\epsilon}(t_j)}_{j=1,\dots,k}\mmid\calP_n}\aswto \calL\pp{D^{1/2}\pp{\int_\epsilon^\infty e^{-x}\B_{(t_j/x)^\alpha}\d x}_{j=1,\dots,k}\mmid\calP},
\]		
by the continuous mapping theorem. It is clear that the right-hand side converges to the claimed limit as $\epsilon\downarrow 0$.
Then, the claimed convergence follows by a triangular array argument  (e.g.~\citet[Theorem 3.2]{billingsley99convergence}) if we could prove
\[
 \lim\limits_{\epsilon\downarrow 0} \limsup\limits_{n \rightarrow \infty} \mathbb{P} \left(|\wt Y^{(1)}_{n,\epsilon}(t)| \ge \eta \mmid\calP_n\right) = 0, \mbox{ almost surely, } \quad \mfa\eta>0. 
\]
We continue to rely on the decomposition involving $\epsilon_n$ and estimates on $\Delta_n,\what N_n$ in the proof of Lemma \ref{lem:decouple ND}.
Indeed, by \eqref{eq:sup bound} (inequalities below hold for all sufficiently large $n$),
\begin{align*}
\sup_{s\in[0,t]}\sabs{\wb N_n(Ds^\alpha)} & \le \sup_{u\in[0,\Gamma_1t^\alpha]}\sabs{\wb N_n(u)}\le \Delta_n +\what N_n^*(\Gamma_1t^\alpha).
\end{align*}
Thus,
\[
\abs{\int_0^\epsilon e^{-x}\wb N_n(D(t/x)^\alpha)\d x} \le \epsilon \Delta_n+\int_0^\epsilon e^{-x} \what N_n^*(\Gamma_1(t/x)^\alpha)\d x.
\]
Therefore,
\[
\proba\pp{\sabs{\wt Y_{n,\epsilon}\topp1(t)}>\eta\mmid\calP_n}\le \proba\pp{\epsilon\Delta_n+\int_0^\epsilon e^{-x}\what N_n^*(\Gamma_1(t/x)^\alpha)\d x>\eta\mmid\calP_n}.
\]
Since $\Delta_n\to 0$ almost surely, it suffices to control, writing $\esp_{\calP_n}(\cdot) = \esp(\cdot\mid\calP_n)$, 
\begin{align*}
\proba\pp{\int_0^\epsilon e^{-x}\what N_n^*(\Gamma_1(t/x)^\alpha)\d x>\eta\mmid\calP_n} & \le \frac1{\eta^2}\esp\pp{\pp{\int_0^\epsilon e^{-x}\what N_n^*(\Gamma_1(t/x)^\alpha)\d x}^2\mmid\calP_n}\\
& \le \frac \epsilon{\eta^2}\int_0^\epsilon \esp\pp{\what N_n^*(\Gamma_1(t/x)^\alpha)^2\mmid\calP_n}\d x.
\end{align*}
Recall the definition of $\what N_n^*$. By Doob's martingale inequality
\[
\esp \what N_n^*(t)^2 \le 4\esp \what N_n(t)^2 = 4 \esp \pp{\frac{N(tn^{\alpha})-tn^\alpha}{n^{\alpha/2}}}^2 = 4t,
\]
whence $\esp(\what N_n^*(\Gamma_1(t/x)^\alpha)^2\mid\calP_n)\le 4\Gamma_1 (t/x)^\alpha$ almost surely. That is, for all $\eta>0$,
\equh\label{eq:integral s_n*}
\proba\pp{\int_0^\epsilon e^{-x}\what N_n^*(\Gamma_1(t/x)^\alpha)\d x>\eta\mmid\calP_n} \le \epsilon^{2-\alpha}\frac{4\Gamma_1 t^\alpha}{\eta^2(1-\alpha)}.
\eque
We have thus proved \eqref{eq:fdd}.

It now remains to show the tightness for $(\wt Y_n(t))_{t\in [0,1]}$. Recall the decomposition of $\wt Y_n(t)$ in \eqref{eq:decom In}. The tightness will then follow from the following assertions
\begin{align}
 \lim\limits_{\delta\downarrow 0} \limsup\limits_{n \rightarrow \infty} \mathbb{P} \left( \sup_{\substack{s,t \in [0,1],|s-t|\le \delta}} |\wt Y^{(1)}_{n,\epsilon}(t) - \wt Y^{(1)}_{n,\epsilon}(s)| > \eta \mmid\calP_n\right) &\le C\Gamma_1\epsilon^{2-\alpha},\label{eq:B.1} \\
\lim\limits_{\delta\downarrow 0} \limsup\limits_{n \rightarrow \infty} \mathbb{P} \left( \sup_{\substack{s,t \in [0,1],|s-t|\le \delta}} |\wt Y^{(2)}_{n,\epsilon}(t) - \wt Y^{(2)}_{n,\epsilon}(s)| > \eta \mmid\calP_n\right)& = 0,  \label{eq:B.2}
\end{align}
for all $\epsilon>0, \eta>0$.
This time,
\begin{align*}
\sup_{\substack{s,t\in[0,1]\\|s-t|\le \delta}}\sabs{\wt Y_{n,\epsilon}\topp1(t)-\wt Y_{n,\epsilon}\topp1(s)}& \le
\sup_{\substack{s,t\in[0,1]\\|s-t|\le\delta}}\int_0^\epsilon e^{-x}\sabs{\wb N_n(D(t/x)^\alpha)- \wb N_n(D(s/x)^\alpha)}\d x\nonumber\\
 & \le 2\int_0^\epsilon e^{-x}\sup_{t\in[0,1]}\sabs{\wb N_n(D(t/x)^\alpha)}\d x \le 2\epsilon\Delta_n+2\int_0^\epsilon \what N_n^*(\Gamma_1 x^{-\alpha})\d x,
\end{align*}
which yields \eqref{eq:B.1} (by a similar argument around \eqref{eq:integral s_n*}).
Next,
\begin{align*}
\sup_{x\ge \epsilon}\sup_{\substack{s,t\in[0,1]\\|s-t|\le\delta}}&\sabs{\wb N_n(D(t/x)^\alpha) - \wb N_n(D(s/x)^\alpha)}
 \le \sup_{x\ge\epsilon}\sup_{\substack{u,v\in[0,\Gamma_1 x^{-\alpha}]\\|u-v|\le \Gamma_1(\delta/x)^\alpha}}\sabs{\wb N_n(u)-\wb N_n(v)}\\
& \le \sup_{\substack{u,v\in[0,\epsilon_n+\Gamma_1(\delta/\epsilon)^\alpha]\\|u-v|\le \Gamma_1(\delta/\epsilon)^\alpha}}\sabs{\wb N_n(u)-\wb N_n(v)}\vee \sup_{\substack{u,v\in [\epsilon_n,\Gamma_1 \epsilon^{-\alpha}]\\|u-v|\le\Gamma_1(\delta/\epsilon)^\alpha}}\sabs{\wb N_n(u)-\wb N_n(v)}\\
& \le 2\Delta_n+2\what N_n^*(\Gamma_1(\delta/\epsilon)^\alpha) + \omega_{\what N_n}(\Gamma_1\epsilon^{-\alpha},\Gamma_1(\delta/\epsilon)^\alpha).
\end{align*}
That is,
\begin{multline*}
\sup_{\substack{s,t\in[0,1]\\|s-t|\le \delta}}\int_\epsilon^\infty e^{-x}\sabs{\wb N_n(D(t/x)^\alpha) - \wb N_n(D(s/x)^\alpha)}\d x\\
\le  2\Delta_n+2\what N_n^*(\Gamma_1(\delta/\epsilon)^\alpha) + \omega_{\what N_n}(\Gamma_1\epsilon^{-\alpha},\Gamma_1(\delta/\epsilon)^\alpha).
\end{multline*}
This time, 
\[
\proba\pp{2\what N_n^*(\Gamma_1(\delta/\epsilon)^\alpha)>\eta\mmid\calP_n}\le \frac{8\esp(\what N_n(\Gamma_1(\delta/\epsilon)^\alpha)^2\mid\calP_n)}{\eta^2}=\frac 8{\eta^2}\Gamma_1 (\delta/\epsilon)^\alpha. 
\]
Therefore taking limsup as $n\to\infty$ first and then $\delta\downarrow 0$ the corresponding iterated limit is zero for all $\epsilon,\eta>0$ fixed. Also, 
\[
\lim_{\delta\downarrow0}\limsupn\proba\pp{\omega_{\what N_n}(\Gamma_1\epsilon^{-\alpha},\Gamma_1(\delta/\epsilon)^\alpha)>\eta\mmid\calP_n} = 0, \mbox{ almost surely,}
\]
by standard estimates of modulus of continuity of a Poisson process. Now \eqref{eq:B.2} follows.  
\end{proof}
\subsection{De-Poissonization}
In this section, we transfer this result from the
Poissonized model $(\wt Y_n(t) )_{t\in[0,1]}$ to the original model $(Y_n(t))_{t\in[0,1]}$. 
This procedure is often referred to as  the de-Poissonization.
\begin{proof}[Proof of Proposition \ref{prop:CLT Y} with $\theta=0$]
Let us denote $\left( Y (t) \right)_{t\in[0,1]}:= S_\alpha^{1/2} \left( Z_\alpha\topp2(t) \right)_{t\in[0,1]}$.
Recall that we have proved the following convergence in Proposition \ref{prop:CLT Poissonized}
\[
\calL\pp{\left(\wt Y_n(t) \right)_{t\in[0,1]}\mmid\calP_n}\aswto
\calL\pp{
\pp{Y(t)}_{t\in[0,1]}\mmid\calP} \, 
\]
in $D[0,1]$. 
Our goal is to show that
\[
\calL\pp{\left(Y_n(t) \right)_{t\in[0,1]}\mmid\calP_n}
\aswto
\calL\pp{\left( Y (t) \right)_{t\in[0,1]}\mmid\calP}.
\]
Let us recall
\begin{align}
\wt Y_n(t) & = \frac{\esp(\wt K(nt)\mid\calP) - \Gamma(1-\alpha)D(nt)^\alpha}{n^{\alpha/2}},\nonumber\\
Y_n(t) & = \frac{\esp(K_{\floor{nt}}\mid\calP) - \Gamma(1-\alpha)D\floor{nt}^\alpha}{n^{\alpha/2}}.\label{eq:centering 1}
\end{align}
 Let $\rho_n(t):=\lfloor nt\rfloor/n$ and let $\mathbb I$ denote the identity function on $[0,1]$. Since $\sup_{t\in[0,1]}|\rho_n(t)-\mathbb I(t)|\le 1/n$  and $Y$ is in $C[0,1]$, by change-of-time lemma \citep[P.~151]{billingsley99convergence} it follows that 
\[
\calL\pp{\pp{\wt Y_n\circ\rho_n(t)}_{t\in[0,1]}\mmid\calP_n}\aswto\calL\pp{\pp{Y(t)}_{t\in[0,1]}\mmid\calP}.
\]

We next compare $\esp(\wt K(nt)\mid\calP)$ and $\esp (K_{\floor{nt}}\mid\calP)$. 
Set $p_j:=P_j^\downarrow$ and
\[
F(s):=\esp\pp{\wt K(s)\mmid\calP}=\sum_{j\geq1}(1-e^{-sp_j}),\qquad G_m:=\esp(K_m\mid\calP)=\sum_{j\geq1}\bigl(1-(1-p_j)^m\bigr).
\]
For $m\geq1$, the elementary estimate used in \citet[the proof of Lemma~4.2]{durieu16infinite} yields
\[
0\leq G_m-F(m)=\sum_{j\geq1}\left(e^{-mp_j}-(1-p_j)^m\right)\leq\frac{F(m)}{m};
\]
for $m=0$, both terms are zero. Then, for $\lfloor nt\rfloor\geq1$,
\[
0\leq Y_n(t)-\wt Y_n(\rho_n(t)) = \frac{G_{\floor{nt}}-F(\floor{nt})}{n^{\alpha/2}} \leq\frac{F(\lfloor nt\rfloor)}{\lfloor nt\rfloor n^{\alpha/2}},
\]
whereas the difference is zero when $\lfloor nt\rfloor=0$. Since $F$ is concave and $F(0)=0$, the function $s\mapsto F(s)/s$ is non-increasing on $(0,\infty)$. It follows that
\[
\sup_{t\in[0,1]}\left|Y_n(t)-\wt Y_n(\rho_n(t))\right|\leq\frac{F(1)}{n^{\alpha/2}}\to0
\]
almost surely. Thus, for every bounded function $h:D[0,1]\to\mathbb R$ that is $1$-Lipschitz with respect to $d_{J_1}\wedge1$,
\begin{align*}
&\left|\esp(h(Y_n)\mid\calP_n)-\esp(h(Y)\mid\calP)\right|\\
&\quad\leq \esp\pp{\abs{h(Y_n)-h\pp{\wt Y_n\circ\rho_n}}\mmid\calP_n}+\left|\esp\pp{h\pp{\wt Y_n\circ\rho_n}\mmid\calP_n}-\esp(h(Y)\mid\calP)\right|\\
&\quad\leq \frac{\esp(F(1)\mid\calP_n)}{n^{\alpha/2}}+\left|\esp\pp{h\pp{\wt Y_n\circ\rho_n}\mmid\calP_n}-\esp(h(Y)\mid\calP)\right|\to0
\end{align*}
almost surely. Note that $F(1) = \sif\ell1 (1-e^{-p_\ell})\le \sif\ell1p_\ell = 1$. Since bounded Lipschitz functions are convergence determining on $D[0,1]$, this proves the claimed convergence.
\end{proof}
\subsection{Joint convergence}\label{sec:WY}
With the quenched convergence of $W_n$ and the convergence of $Y_n$ established, we are ready to prove the main result. 
	\begin{proof}[Proof of Theorem \ref{thm:2}]	
Recall that we write $W_n = (W_n(t))_{t\in[0,1]}$ and $Y_n = (Y_n(t))_{t\in[0,1]}$. Write similarly $Z = Z_\alpha\topp1 = (Z_\alpha\topp1(t))_{t\in[0,1]}, Z' = Z_\alpha\topp2 = (Z_\alpha\topp2(t))_{t\in[0,1]}$, and furthermore $S \equiv S_\alpha^{1/2}$. Write also $\esp_\calP(\cdot) = \esp(\cdot\mid\calP)$. 
Throughout, $S_\alpha$ is defined as in \eqref{eq:diversity} on the same probability space as $W_n, Y_n$, and is $\calP$-measurable. In the sequel we assume further that $Z,Z'$ are on the same probability space, the two are mutually independent, and also independent from $\calP$.

The quenched convergence
$W_n \aswto S_\alpha^{1/2}Z_\alpha\topp1$
		with respect to $\calP$ as $n\to\infty$ (recall \eqref{eq:DW16}) is equivalent to
\[
\limn \esp_\calP f(W_n) =  \esp_\calP f\pp{SZ}, \mbox{ almost surely,}
\]
for all continuous and bounded functions $f:D[0,1]\to\R$.
Similarly, Proposition \ref{prop:CLT Y} is the same as 
\[
\limn \esp_{\calP_n} g(Y_n) = \esp_\calP g\pp{SZ'},
\]
for all continuous and bounded functions $g$. 
Now, to prove Theorem \ref{thm:2} it suffices to show that
\begin{align}
\esp_{\calP_n} \pp{f(W_n)g(Y_n)} - \esp_\calP \pp{f(SZ)g(SZ')} 
&= \esp_{\calP_n}  \pp{f(W_n)g(Y_n)} - \esp_{\calP_n} \pp{ f(SZ) g(Y_n)}\nonumber\\
 & \quad +   \esp_{\calP_n}\pp{ f(SZ)g(Y_n)}- \esp_\calP \pp{f(SZ) g(SZ')}\label{eq:joint conv cond}
\end{align}
tends to zero as $n\to\infty$ for all $f,g$ as above \citep[Corollary 1.4.5]{vandervaart96weak}. 
The absolute value of the first difference on the right-hand side above is the same as
\begin{align*}
\abs{\esp_{\calP_n}\pp{g(Y_n)(f(W_n)-f(SZ))}} & = 
\abs{\esp_{\calP_n}\pp{g(Y_n)\esp_\calP(f(W_n)-f(SZ))}}\\
& \le \nn g_\infty \esp_{\calP_n}\abs{\pp{\esp_\calP(f(W_n)-f(SZ))}}\to 0,
\end{align*}
almost surely, where the equality follows since $Y_n$ is $\calP$-measurable. The convergence follows from the conditional dominated convergence applied to
\[
X_n:=\abs{\esp_\calP(f(W_n)-f(SZ))},
\]
since $X_n\to0$ almost surely and $X_n\leq2\nn f_\infty$.

Recall $S\equiv S_\alpha^{1/2} = (\Gamma(1-\alpha)D)^{1/2}$. Introduce $S_n= (\Gamma(1-\alpha)D_n)^{1/2}$ (recall $D_n$ is $\calP_n$-measurable as introduced in \eqref{eq:D_n}). For the second term on the right-hand side of \eqref{eq:joint conv cond}, we decompose further as
\begin{align*}
 \esp_{\calP_n}\pp{ f(SZ)g(Y_n)}- \esp_\calP \pp{f(SZ) g(SZ')}
&= \esp_{\calP_n}  \pp{ f(SZ)g(Y_n)}- \esp_{\calP_n} \pp{f(S_nZ) g(Y_n)} \\
& \quad + \esp_{\calP_n}f(S_nZ)\esp_{\calP_n}g(Y_n)- \esp_{\calP_n}f(S_nZ) \esp_\calP g(SZ')\\
&\quad +\esp_{\calP}f(S_nZ)\esp_\calP g(SZ')- \esp_\calP f(SZ) \esp_\calP g(SZ').
\end{align*}
Here we used
\[
\esp_{\calP_n}f(S_nZ)=\esp_\calP f(S_nZ),
\]
which follows since $S_n$ is $\calP_n$-measurable and $Z$ is independent from $\calP$.
We have for the first difference,
\[
\abs{\esp_{\calP_n}  \pp{ f(SZ)g(Y_n)}- \esp_{\calP_n} \pp{f(S_nZ) g(Y_n)}}\le \nn g_\infty\esp_{\calP_n}\abs{f(SZ)-f(S_nZ)}\to 0
\]
almost surely, by the conditional dominated convergence applied to
\[
X_n:=\abs{f(SZ)-f(S_nZ)}.
\]
Indeed, $X_n\to0$ almost surely and $X_n\leq2\nn f_\infty$. The second difference goes to zero by Proposition \ref{prop:CLT Y}, and the third difference goes to zero since $S_n\to S$, by the dominated convergence theorem for the fixed conditioning $\sigma$-algebra $\calP$. This completes the proof.
\end{proof}

\begin{acks}[Acknowledgments]
Y.W.~thanks Alexander Iksanov for pointing out several closely related references in Remark \ref{rem:literature}. Y.W.~thanks the Associate Editor and an anonymous referee for helpful suggestions and Vishakha for discussions.
\end{acks}

\begin{funding}
Y.W.~was partially supported by Simons Foundation (MP-TSM-00002359). 
\end{funding}

\bibliographystyle{imsart-nameyear} 
	\bibliography{../../include/references,../../include/references18}

\end{document}